\newtheorem{Theorem}{Theorem}[section]
\newtheorem{Lemma}[Theorem]{Lemma}
\newtheorem{Sublemma}[Theorem]{Sublemma}
\newtheorem{Corollary}[Theorem]{Corollary}
\newtheorem{Proposition}[Theorem]{Proposition}
\newtheorem{Definition}[Theorem]{Definition}
\newtheorem{Remark}[Theorem]{Remark}
\def \dim{{\mbox {dim}}\,}
\begin{document}
\title[Fiberwise symmetrizations]{Fiberwise symmetrizations for variational problems on fibred manifolds}

\author[C. Sung]{Chanyoung Sung}
 \address{Dept. of Mathematics Education  \\
          Korea National University of Education \\
          Cheongju, Korea
          }
\email{cysung@kias.re.kr}

\keywords{symmetrization, variational problem, first eigenvalue}

\subjclass[2020]{49R05, 35P15, 58E99}

\date{}


\begin{abstract}
We establish a framework for fiberwise symmetrization to find a lower bound of a Dirichlet-type energy functional in a variational problem on a fibred Riemannian manifold, and use it to prove a comparison theorem of the first eigenvalue of the Laplacian on a warped product manifold.
\end{abstract}

\maketitle


\section{Introduction}

On a Riemannian manifold one often deals with a Dirichlet-type energy functional in a variational problem and needs to find the minimum of such a functional.
Under some geometric conditions its lower bound can be given by the minimum value of the corresponding functional on a isoperimetric region of a space form, where a minimizer turns out to be radially symmetric. Here we find the basic underlying principle that symmetrization reduces energy.

For a Morse function $f$ on a smooth closed Riemannian $m$-manifold $(M^m,g)$ of volume $V$, the spherical rearrangement of $f$ is the radially-symmetric continuous function $f_*:S^m_V\rightarrow \Bbb R$  such that $\{q\in S^m_V| f_*(q)<t \}$ for any $t\in (\min(f), \max(f)]$ is the geodesic ball centered at the south pole $q_0$ with the same volume as $\{x\in M| f(x)<t \}$ where $S^m_V:=(S^m,\textsl{g}_V)$ denotes an $m$-sphere with a metric of constant curvature and volume $V$. By the radial symmetry of a function on $S^m$ we mean its invariance under the standard action of $O(m+1)$ around the axis passing through the two poles.
Throughout the paper, we shall adopt these notations along with $(S^m, g_{_{\Bbb S}})$ which denotes the standard round $m$-sphere with constant curvature 1 and volume $V_m$.

The reason we require $f$ to be a Morse function is that the volume of $\{x\in M| f(x)<t \}$ as a function of $t$ is a continuous function which is smooth except a finite number of points so that the resulting $f_*$ is a well-defined piecewise-smooth continuous function.
The good thing about $f_*$ is that it is not only a radially-symmetric continuous function but also its Dirichlet energy $||\nabla f_*||_{L^2}$ can be estimated by $||\nabla f||_{L^2}$ when $(M^m,g)$ is favourable, while its $L^2$-norm $||f_*||_{L^2}$ is always the same as $||f||_{L^2}$. We aim to show that this method of symmetrization can be done fiberwisely in case of a fibred manifold, and can be applied to estimate lower bounds of Dirichlet-type energy functionals on a fibred manifold.

The first obstacle for this task is that it is generally not possible to approximate $C^1$-closely any smooth function by a fiberwise Morse function, but it turns out that it can be approximated by a so-called generic-fiberwise Morse function. As will be explained in Section \ref{elle}, $F$ being a generic-fiberwise Morse function on $N\times M$ with fiber $M$ means that there exists an open dense subset $N_0$ of $N$ with measure-zero complement such that $F$ restricted to each fiber over $N_0$ is a Morse function. We shall prove that the subset of such functions are dense in $C^\infty(N\times M)$ with $C^2$-topology and our main results are summarized as :
\begin{Theorem}\label{forgive-me}
Let $(M^m,g)$ and $(N^n,h)$ be smooth closed Riemannian manifolds such that $M$ has volume $V$. If $F:N\times M\stackrel{C^\infty}{\rightarrow} \Bbb R$ is a generic-fiberwise Morse function, then there exists a Lipschitz continuous function $F_{\bar{*}}\in L_1^p(N\times S^m_V)$ for any $p\geq 1$ which is smooth on an open dense subset with measure-zero complement such that for $s\in N_0$ $$F_{\bar{*}}|_{\{s\}\times S^m_V}=(F|_{\{s\}\times M})_*.$$
Moreover
$$||d^SF_{\bar{*}}||_{L^p} \leq C ||d^M F||_{L^p}\ \ \ \textrm{and}\ \ \ ||d^NF_{\bar{*}}||_{L^p} \leq ||d^N F||_{L^p}$$ while  $||F_{\bar{*}}||_{L^p}=||F||_{L^p}$, where $C$ is a constant depending on $(M,g)$ and is equal to $(\frac{V_m}{V})^{\frac{1}{m}}\sqrt{\frac{m-1}{k}}$ if Ricci curvature $Ric_g\geq k>0$.
\end{Theorem}
Here obvious product metrics are assumed on $N\times M$ and $N\times S^m_V$ and we split the exterior derivative $d$ on $N\times M$ and $N\times S^m$ as $d^N+d^M$ and $d^N+d^S$ respectively.
Namely $d^N=p_1\circ d$ and  $d^M=p_2\circ d$ where $p_1:T^*(N\times M)\rightarrow \pi_1^*(T^*N)$ and  $p_2:T^*(N\times M)\rightarrow \pi_2^*(T^*M)$ are projections according to the obvious decomposition $T^*(N\times M)\simeq \pi_1^*(T^*N)\oplus \pi_2^*(T^*M)$, and similarly for $d=d^N+d^S$ on $N\times S^m$.

We shall also consider fiberwise Euclidean or hyperbolic symmetrization in Section \ref{SWYang}, and indeed similar results are obtained in the same way, assuming that $M$ is flat or hyperbolic respectively.

One can apply these results to some variational problems on fibred manifolds, such as the first eigenvalue problem and the Yamabe problem, and estimate the lower bounds of corresponding functionals on fibered manifolds.
As an example of a test case, we shall estimate a lower bound of the first eigenvalue $\lambda_1$ of the Laplacian on a warped product manifold.
\begin{Theorem}\label{Yoon-1}
Let $(M^m,g)$ and $(N^n,h)$ be smooth closed Riemannian manifolds for $m\geq 2$ such that $M$ has Ricci curvature $\textrm{Ric}_g \geq m-1$. Then for any smooth positive function $\rho$ on $N$,
$$\lambda_1(N\times M,h+\rho^2g) \geq \lambda_1(N\times S^m,h+\rho^2g_{_{\Bbb S}}).$$
\end{Theorem}
The case of nontrivial bundles are more involved, so we shall consider a generalization of the above theorem to some nontrivial bundles along with more interesting examples in a forthcoming paper \cite{Sung1}. We shall also investigate applications to the estimation of Yamabe constants elsewhere \cite{Sung2}.

\section{Preliminaries}

\subsection{Conventions and Notations}

Here are collected some common conventions and notations to be used throughout this paper.

First of all, every manifold is assumed to be smooth and connected unless otherwise stated.
When $X$ is a $k$-dimensional Riemannian manifold (possibly with boundary), $\mu(X)$ denotes the $k$-dimensional volume of $X$, and $\chi_{_S}$ denotes the characteristic function of a subset $S$.

In case that a Lie group $G$ has a smooth left (or right) action on a smooth manifold $X$, it induces a left (or right) action on $C^\infty(X)$ respectively defined by
\begin{eqnarray}\label{BHCP}
(\mathfrak{g}\cdot F)(x):=F(\mathfrak{g}^{-1}\cdot x)\ \ \ \ \textrm{or}\ \ \ \ (F\cdot \mathfrak{g})(x):=F(x\cdot \mathfrak{g}^{-1})
\end{eqnarray}
for $F\in C^\infty(X)$, $\mathfrak{g}\in G$ and $x\in X$.

Given a function $F$ on a product manifold $N\times X$, we shall regard $F$ as a family of functions on $X$ parametrized by $N$, in which case we denote $F|_{\{s\}\times X}$ for $s\in N$ by $F_s$ for notational simplicity. The reader is requested to be cautious that it should not be misunderstood as the abbreviated notation of the partial derivative $\frac{\partial F}{\partial s}$.

Given a map $f$, its inverse image of a set $S$ is denoted by $f^{-1}S$, and
the open ball of radius $\epsilon>0$ and center $p$ in any metric space is denoted by $B_\epsilon(p)$.

\subsection{Weak derivative and Sobolev space}
On a smooth Riemannian manifold $(M,g)$, $L_1^p(M)$ for $p\geq 1$ is the completion of $$\{f\in C^\infty(M)| \ ||f||_{L_1^p}:=(\int_M (|f|^p+|df|_g^p)\ d\mu_g)^{\frac{1}{p}}<\infty\}$$ with respect to $L_1^p$-norm $||\cdot||_{L_1^p}$.
Sometimes it's not easy to determine whether a non-smooth function belongs to it or not. However, on a smooth closed $M$, $L_1^p(M)$ is not only independent of choice of metric $g$ but also characterized as the space of weakly differentiable functions on $M$ with bounded $L_1^p$-norm.

A locally integrable function $f:M\rightarrow \Bbb R$, i.e. in $L^1_{loc}(M)$, is called weakly differentiable if it has all of its weak partial derivatives of 1st order in a local coordinate. Here a weak derivative is meant in distributional sense, and required to be in $L^1_{loc}$. This condition is independent of the choice of a local coordinate,\footnote{Strictly speaking, it is enough for the coordinate change to be a $C^1$-diffeomorphism.} and a weak derivative, if it exists, is unique up to a set of measure zero.

In a special case of an open interval in $\Bbb R$, weak differentiability is equivalent to absolute continuity, where a weak derivative is just the ordinary (strong) derivative defined almost everywhere. A function $f:I\rightarrow \Bbb R$ on an interval $I$ is absolutely continuous if for any $\epsilon>0$, there exists $\delta>0$ such that for any finite number of subintervals $I_j=[a_j,b_j]\subset I$ satisfying $\sum_j|a_j-b_j|<\delta$ and $(a_j,b_j)\cap (a_k,b_k)=\emptyset$ for $j\ne k$, $\sum_j|f(a_j)-f(b_j)|<\epsilon.$  In case that $I$ is a bounded closed interval, the absolute continuity on $I$ is equivalent to that $f$ is weakly differentiable on the interior of $I$ and its weak derivative belongs to $L^1(I)$.
In higher dimensions, we have the following characterization for weak differentiability :
\begin{Proposition}\label{Sewoong}
Let $\Omega\subset \Bbb R^n$ be a domain. Then $f\in L^1_{loc}(\Omega)$ is weakly differentiable iff it is equivalent to a function that is absolutely continuous on almost every straight line
parallel to the coordinate axes and whose 1st order partial derivatives (which consequently exist a.e. and are measurable) are locally integrable in $\Omega$.
\end{Proposition}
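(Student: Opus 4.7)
The plan is to prove the two directions separately, with Fubini's theorem serving as the bridge between the one-dimensional notion of absolute continuity and the distributional notion of weak differentiability.

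For the easier reverse implication, I would fix a test function $\phi\in C_c^\infty(\Omega)$, a coordinate index $i$, and write $\hat{x}=(x_1,\ldots,\hat{x}_i,\ldots,x_n)$. By Fubini,
$$\int_\Omega f\,\partial_i\phi\,dx=\int\!\!\left(\int_{\ell_{\hat{x}}} f(x)\,\partial_i\phi(x)\,dx_i\right)d\hat{x},$$
where $\ell_{\hat{x}}$ is the line through $\hat{x}$ parallel to $e_i$. On almost every such line the ACL representative of $f$ is absolutely continuous with classical derivative $\partial_i^c f$ in $L^1_{loc}$, so the one-dimensional integration-by-parts formula applies with no boundary contribution (because $\phi$ has compact support). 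Reapplying Fubini yields $\int_\Omega f\,\partial_i\phi=-\int_\Omega(\partial_i^c f)\phi$, which identifies $\partial_i^c f$ with the weak derivative and verifies local integrability.

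For the forward implication I would proceed by mollification. Letting $g_i\in L^1_{loc}(\Omega)$ denote the weak $i$-th partial derivative of $f$, set $f_\varepsilon=f*\rho_\varepsilon$ with $\rho_\varepsilon$ a standard mollifier on a relatively compact subdomain; then $f_\varepsilon\to f$ and $\partial_i f_\varepsilon=g_i*\rho_\varepsilon\to g_i$ in $L^1_{loc}$ for every $i$. A diagonal argument produces a subsequence $\varepsilon_k\to0$ such that $f_{\varepsilon_k}\to f$ and $\partial_i f_{\varepsilon_k}\to g_i$ pointwise a.e. Fix a coordinate rectangle $R=I_1\times\cdots\times I_n\Subset\Omega$ and a base point $c_i\in I_i$; by Fubini applied to the approximating functions and passing to the limit along the subsequence, for a.e.\ $\hat{x}$ one has $f_{\varepsilon_k}|_{\ell_{\hat{x}}}\to f|_{\ell_{\hat{x}}}$ and $\partial_i f_{\varepsilon_k}|_{\ell_{\hat{x}}}\to g_i|_{\ell_{\hat{x}}}$ in $L^1(I_i)$. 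Taking the limit in the one-dimensional fundamental theorem of calculus
$$f_{\varepsilon_k}(\hat{x},x_i)-f_{\varepsilon_k}(\hat{x},c_i)=\int_{c_i}^{x_i}\partial_i f_{\varepsilon_k}(\hat{x},t)\,dt$$
yields an absolutely continuous representative of $f$ along $\ell_{\hat{x}}$ whose classical derivative coincides a.e.\ with $g_i$.

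The main obstacle is producing, in the forward direction, a \emph{single} representative of $f$ that is simultaneously ACL with respect to all $n$ coordinate directions, rather than a different representative for each axis. Handling this cleanly requires the mollification argument above together with the observation that the pointwise a.e.\ limit $\tilde{f}$ of $f_{\varepsilon_k}$ is well-defined independently of the direction and, by the line-by-line argument applied successively in each coordinate (with exceptional null sets unioned over $i=1,\ldots,n$), is ACL in every direction with $\partial_i^c\tilde{f}=g_i$ a.e. Once this is established, a partition-of-unity argument glues the construction on coordinate rectangles to the full domain $\Omega$, and invariance of the notion under change of axis-parallel coordinate patches follows from the fact that the weak derivatives $g_i$ are intrinsic.
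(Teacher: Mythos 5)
The paper does not actually supply a proof of this proposition; it simply refers the reader to standard references (Evans--Gariepy, Maz'ja, Morrey, Ziemer). Your sketch reconstructs the textbook argument contained in those sources: Fubini plus one-dimensional integration by parts for the reverse direction, and mollification plus the one-dimensional fundamental theorem of calculus for the forward direction, with the ``single representative'' subtlety handled by unioning the exceptional null sets over the coordinate directions and then gluing rectangles with a partition of unity. The strategy and the execution are correct in substance, and this is exactly the argument the cited references give.

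One point in the forward direction deserves to be made fully precise. Having fixed the rectangle $R$ and a base value $c_i\in I_i$, the convergence $f_{\varepsilon_k}(\hat x,c_i)\to f(\hat x,c_i)$ for a.e.\ $\hat x$ does not follow automatically from $L^1_{loc}$ (or even a.e.) convergence of $f_{\varepsilon_k}$ on $R$, because the hyperplane $\{x_i=c_i\}$ has measure zero. You should either let the base point depend on the line, choosing for a.e.\ $\hat x$ some $c_i(\hat x)$ at which the pointwise limit exists, or invoke Fubini once more in the $x_i$-variable to conclude that a.e.\ fixed $c_i\in I_i$ is ``good'' in the sense that $f_{\varepsilon_k}(\cdot,c_i)\to f(\cdot,c_i)$ a.e.\ on the slice, and pick such a $c_i$. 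Either fix is routine and uses nothing beyond the tools you already invoked, so this is a presentational gap rather than a mathematical one. With that repaired, the ACL representative on $R$ can be written explicitly as
\begin{equation*}
\tilde f(\hat x,x_i):=f(\hat x,c_i)+\int_{c_i}^{x_i}g_i(\hat x,t)\,dt,
\end{equation*}
and the rest of your outline goes through.
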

\begin{proof}
There are many books containing the proof of this. The reader may refer to \cite{Evans, Mazja, morrey, ziemer}.
\end{proof}

As a result, the weak 1st order partial derivatives of a weakly differentiable function is equivalent to its ordinary partial derivatives defined almost everywhere.

\subsection{Isoperimetric inequality}

The symmetrization methods are closely related to the isoperimetric problem. One can apply the results on the isoperimetric comparison to estimating Dirichlet-type energy functionals. The followings are the facts we shall use later.
\begin{Definition}
On a smooth closed Riemannian $m$-manifold $(M,g)$ with volume $V$ for $m\geq 2$, and for any $\beta \in (0,1)$, let $$W_{\beta}
=\{ \Omega \subset M| \Omega \textrm{ is open with smooth } \partial\Omega \textrm{ and }\mu(\Omega )=\beta V \}.$$ The isoperimetric function $h_{(M,g)}:(0,1)\rightarrow \Bbb R$ of $(M,g)$
is defined by
$$h_{(M,g)}(\beta ):=
\inf \{ \frac{\mu (\partial \Omega )}{V} | \Omega \in
W_{\beta} \} .$$
Similarly the isoperimetric profile $I_{(M,g)}:(0,V)\rightarrow \Bbb R$ of  $(M,g)$ is defined as
$$I_{(M,g)}(v):=
\inf \{ \mu(\partial \Omega) | \Omega \in W_{\frac{v}{V}} \}=Vh_{(M,g)}(\frac{v}{V}).$$
\end{Definition}
Obviously $$I_{(M,g)}(v)=I_{(M,g)}(V-v)\ \ \ \textrm{and}\ \ \ h_{(M,a^2 g)} = \frac{1}{a} h_{(M,g)}$$ for a constant $a>0$.
It is well-known that the isoperimetric profile is continuous and satisfies $$\lim_{v\rightarrow 0}\frac{I_{(M,g)}(v)}{v^{\frac{m-1}{m}}}=\gamma_m$$
where $\gamma_m=\frac{\mu(S^{m-1}, g_{_{\Bbb S}})}{\mu(B_1(0))^{\frac{m-1}{m}}}$ is the classical $m$-dimensional isoperimetric constant in $(\Bbb R^m,g_{_{\Bbb E}})$.(\cite{MJ})
Thus there exists a constant $\gamma>0$ so that for any sufficiently small $v>0$
$$\gamma^{-1} v^{\frac{m-1}{m}} \leq   I_{(M,g)}(v) \leq \gamma v^{\frac{m-1}{m}},$$ and so if $(M',g')$ is another smooth closed Riemannian $m$-manifold with the same volume $V$ as $M$, then there exists a constant $\gamma'>0$ so that
\begin{eqnarray}\label{ibs-cnu}
(\gamma')^{-1} I_{(M,g)}(v) \leq   I_{(M',g')}(v) \leq \gamma' I_{(M,g)}(v)
\end{eqnarray}
for any $v\in (0,V)$.

The L\'{e}vy-Gromov comparison theorem states that if $(M,g)$ satisfies $Ric_g \geq m-1$,
then $$h_{(M,g)}(\beta ) \geq h_{(S^m, g_{_{\Bbb S}})} (\beta )$$ for any $\beta$. (In fact a little stronger inequality holds by B\'{e}rard-Besson-Gallot \cite{Gallot}.)
Simply-connected complete Riemannian manifolds with constant curvature which are $(S^m,g_{_{\Bbb S}})$, $(\Bbb R^m,g_{_{\Bbb E}})$, $(\Bbb H^m,g_{_{\Bbb H}})$ are called isoperimetric model spaces, because the isoperimetric inequality on them is saturated by the most symmetric domains. Namely any geodesic ball in it has the least ``boundary area"  among all the domains with the same volume.

\section{Fiberwise Morse function}\label{elle}

To develop our main tool, the use of Morse theory is crucial.
On a smooth manifold $M$, a smooth function $f:M\rightarrow \Bbb R$ is defined to be a Morse function if it
has only non-degenerate critical points, meaning that $df$ which is a smooth section of $T^*M$ is transverse to the zero section. If $M$ is compact, then any Morse function on it must have finite critical points, and moreover the set of Morse functions on $M$ forms an open dense subset of $C^2(M,\Bbb R)$.

Recall that the $C^k$-topology for $k\geq 0$ on the vector space of smooth real-valued functions on a compact $n$-manifold $X$ (possibly with boundary) is defined as follows.
First we choose any smooth Riemannian metric on $X$, and define the $C^k$-norm of $f$ as
$$||f||_{C^k}:=\sum_{i=0}^k\sup_X|\stackrel{i}{\overbrace{\nabla\cdots\nabla}} f|$$ where the covariant derivative $\nabla$ and $|\cdot|$ are computed using the chosen metric. The topology induced by this norm is the $C^k$-topology which is in fact independent of the particular choice of a metric.

The notion of ``almost everywhere" means everywhere except a subset of measure zero. In case of a smooth manifold, we shall use the measure induced by a Riemannian metric on it, and obviously the measure zero property does not depend on the particular choice of a Riemannian metric. Moreover the measure zero property can be defined without recourse to a Riemannian metric. One takes a coordinate covering and compute its measure in Lebesgue measure of each local coordinate.
One can easily see that being of measure-zero w.r.t. Lebesgue measure in each local coordinate is equivalent to having zero Riemannian volume.

The following is a key proposition to enter our main argument. We shall consider any element of $C^\infty(N\times M, \Bbb R)$ as a smooth family of smooth real-valued functions on $M$ parametrized by $N$, and recall that $F|_{\{s\}\times M}$ for $F:N\times M\rightarrow \Bbb R$ is shortly denoted by $F_s$.
\begin{Theorem}\label{morse}
Let $M$ and $N$ be smooth closed manifolds and $C^\infty(N\times M, \Bbb R)$ be endowed with the $C^2$-topology. If $F\in C^\infty(N\times M, \Bbb R)$, then for any open neighborhood $U\subseteq C^\infty(N\times M, \Bbb R)$ of $F$ there exist $\tilde{F}\in U$ and an open dense subset  $N_0\subseteq N$ such that the complement $N_0^c$ of $N_0$ is of measure zero and $\tilde{F}_s$ for any $s\in N_0$ is a Morse function on $M$.
\end{Theorem}
\begin{proof}
Take any smooth embedding $\imath:M\hookrightarrow \Bbb R^k$ for some $k$ and an open ball $B_\epsilon(0)$ in $\Bbb R^k$, and let $x_1,\cdots,x_k$ be the usual coordinate functions on $\Bbb R^k$. By the well-known fact in Morse theory \cite{pollack}, $$E_s:=\{(a_1,\cdots,a_k)\in B_\epsilon(0)|F_s+\imath^*(\sum_{j=1}^ka_jx_j)  \textrm{ is a Morse function on }M\}$$
is an open dense subset of $B_\epsilon(0)$, whose complement in $B_\epsilon(0)$ is of measure-zero. Define $$E:=\cup_{s\in N}(\{s\}\times E_s)\subseteq N\times B_\epsilon(0).$$

Because the set of Morse functions on $M$ is open in $C^\infty(M)$ with respect to $C^2$-topology, and $F_s$ is smoothly-varying with respect to $s\in N$, if $F_s+\imath^*(\sum_ja_jx_j)$ is a Morse function on M, then there exists an open neighborhood $U(s)\subset N$ of $s$ and an open ball $B(a)\subset B_\epsilon(0)$ around $a:=(a_1,\cdots,a_k)$ such that $F_t+\imath^*(\sum_jb_jx_j)$ for any $t\in U(s)$ and any $(b_1,\cdots,b_k)\in B(a)$ is also a Morse function on $M$.
So any $(s,a)\in E$ has an open neighborhood contained in $E$, implying that $E$ is an open subset of $N\times B_\epsilon(0)$.

Moreover a (relatively) closed subset $E^c\subset N\times B_\epsilon(0)$ has measure zero with respect to the product measure\footnote{In fact, as a Borel measure, the product measure $\mu_h\times \mu_{g_{_{\Bbb E}}}$ is equal to the Riemannian measure $\mu_{h+g_{_{\Bbb E}}}$ by Fubini's theorem in $\Bbb R^{k+\dim N}$.} $\mu_h\times \mu_{g_{_{\Bbb E}}}$ where $h$ is a Riemannian metric on $N$ and $g_{_{\Bbb E}}$ is the Euclidean metric on $B_\epsilon(0)$, since
\begin{eqnarray*}
\int_{N\times B_\epsilon(0)}\chi_{_{E^c}}\ d(\mu_h\times\mu_{g_{_{\Bbb E}}}) &=&  \int_{N}\int_{B_\epsilon(0)}\chi_{_{E^c}}\ d\mu_{g_{_{\Bbb E}}} d\mu_h\\ &=&  \int_{N}0\ d\mu_h\\ &=& 0.
\end{eqnarray*}
We claim that for almost every $a\in B_\epsilon(0)$, $H_a:=E^c\cap (N\times \{a\})$ considered as a (closed) subset of $N$ has measure 0. Using the product measure $\mu_h\times \mu_{g_{_{\Bbb E}}}$ again,
\begin{eqnarray*}
0 &=& \int_{N\times B_\epsilon(0)} \chi_{_{E^c}} \ d(\mu_h\times\mu_{g_{_{\Bbb E}}}) \\ &=& \int_{B_\epsilon(0)}\left(\int_N \chi_{_{E^c}} \ d\mu_h\right) d\mu_{g_{_{\Bbb E}}}\\ &=& \int_{B_\epsilon(0)}\mu_h(H_a)\ d\mu_{g_{_{\Bbb E}}}(a).
\end{eqnarray*}
Since the measure $\mu_h(H_a)$ of $H_a$ is nonnegative, it must be zero for almost every $a=(a_1,\cdots,a_k)$. We choose such $a$ that $\mu_h(H_a)=0$ and  define $$\tilde{F}:=F+\imath^*(\sum_ja_jx_j)\ \ \ \ \textrm{and}\ \ \ \ N_0:=N-H_a.$$
Thus we have proved that $\tilde{F}_s$ is a Morse function on $M$ for $s\in N_0$, and $N_0$ is open and dense in $N$ since its complement is a closed subset of measure zero. That $\tilde{F}$ belongs to $U$ is achieved by taking any sufficiently small $\epsilon>0$.



We will find an open neighborhood $B_\epsilon(F)$ contained in $\mathcal{E}$. Let's define any $C^2$-norms $||\cdot||_{C^2}$ for $C^\infty(N, \Bbb R)$ and $C^\infty(M, \Bbb R)$ respectively. Those norms naturally give a $C^2$-norm $||\cdot||_{C^2}$ for $C^\infty(N\times M, \Bbb R)$ such that embeddings $\imath:C^\infty(M, \Bbb R)\hookrightarrow C^\infty(N\times M, \Bbb R)$ and $\jmath:C^\infty(N, \Bbb R)\hookrightarrow C^\infty(N\times M, \Bbb R)$ are isometries with respect to these norms. Repeating the above argument with $f_i=0\ \forall i$, one can get a $N(F)$ satisfying  $N(F)\subseteq \mathcal{E}$. Define $$\mathcal{U}_\epsilon:=\{f\in C^\infty(M, \Bbb R)|\   ||f||_{C^2}<\epsilon\}$$ and $$F+\mathcal{U}_\epsilon:=\{F+f|f\in U_\epsilon\}\subset C^\infty(N\times M, \Bbb R).$$ For any sufficiently small $\epsilon>0$, $F+\mathcal{U}_\epsilon\subseteq N(F)$ and hence $F+\mathcal{U}_\epsilon\subseteq \mathcal{E}$. If $\tilde{F}\in C^\infty(N\times M, \Bbb R)$ satisfies $||\tilde{F}-F||_{C^2}<\epsilon$, then for any $s\in N$ $$\tilde{F}|_{\{s\}\times M}-F|_{\{s\}\times M}\in \mathcal{U}_\epsilon$$ so that it is a Morse function on $M$. Thus taking $$B_\epsilon(F):=\{\tilde{F}\in C^\infty(N\times M, \Bbb R)|\ ||\tilde{F}-F||_{C^2}<\epsilon\},$$ we have $B_\epsilon(F)\subset \mathcal{E}$ for any sufficiently small $\epsilon>0$.
\end{proof}
We shall call a smooth function $F:N\times M\rightarrow \Bbb R$ \textit{generic-fiberwise Morse function} if there exists an open dense subset $N_0$ of $N$ such that $N_0^c$ is of measure zero and $F_s$ for all $s\in N_0$ is a Morse function on $M$. One may take such $N_0$ to be the largest one, i.e. the union of all such $N_0$ and we denote it by $N_0(F)$. If $N_0(F)$ is equal to $N$, we shall call $F$ \textit{fiberwise Morse function}.

In case that a Lie group $G$ has a smooth left (or right) action on both $N$ and $M$, it acts on $N\times M$ by the obvious extension, and induces a left (or right) action on $C^\infty(N\times M)$ as in (\ref{BHCP}).
\begin{Corollary}\label{morse-1}
Under the same hypothesis as Theorem \ref{morse}, further suppose that a compact Lie group $G$ acts on $N$ and $M$ smoothly, and $F:N\times M\rightarrow \Bbb R$ is invariant under this action. Then one can choose $\tilde{F}$ in Theorem \ref{morse} such that the $G$-orbit of $\tilde{F}$ also remains in $U$. If the action on $M$ is trivial, then $\tilde{F}$ can be chosen to be $G$-invariant.
\end{Corollary}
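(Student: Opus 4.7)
I would split the corollary into its two assertions and handle them essentially by inspection.

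\textbf{First claim.} The plan is to exploit the $G$-invariance of $F$ together with the compactness of $G$. The evaluation map $\Phi\colon G\times C^\infty(N\times M,\Bbb R)\to C^\infty(N\times M,\Bbb R)$, $(g,H)\mapsto g\cdot H$, is continuous in the $C^2$-topology, since $G$ is compact and acts smoothly on $N\times M$. Because $F$ is $G$-invariant we have $\Phi(G\times\{F\})=\{F\}\subseteq U$, so $G\times\{F\}\subseteq \Phi^{-1}(U)$. The tube lemma, applicable thanks to compactness of $G$, then supplies an open neighborhood $U'\subseteq U$ of $F$ with $g\cdot U'\subseteq U$ for every $g\in G$. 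Applying Proposition \ref{morse} to $U'$ yields a generic-fiberwise Morse function $\tilde F\in U'$, whose entire $G$-orbit automatically stays inside $U$.

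\textbf{Second claim.} When the action on $M$ is trivial the $G$-action on $N\times M$ reads $g\cdot(s,x)=(g\cdot s,x)$ and the induced action on $C^\infty(N\times M,\Bbb R)$ affects only the $s$-dependence. The perturbation used in the proof of Proposition \ref{morse}, namely $\imath^*(\sum_j a_jx_j)$ viewed as a function on $N\times M$ via the projection to $M$, depends only on the $M$-variable and is therefore automatically $G$-invariant. Combined with the $G$-invariance of $F$ this makes $\tilde F=F+\imath^*(\sum_j a_jx_j)$ $G$-invariant. Moreover, since $F_{g\cdot s}=F_s$ for every $g\in G$, the bad set $H_a$ on which $\tilde F_s$ fails to be Morse is itself $G$-invariant, so the open dense subset $N_0$ can be chosen $G$-invariant as well, even though the corollary only requires $G$-invariance of $\tilde F$.

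The only mildly technical point is the continuity of the induced $G$-action on $C^2(N\times M,\Bbb R)$, but this is standard: for a compact Lie group acting smoothly on a compact manifold the pullback by $L_{g^{-1}}$ varies continuously with $g$ in the $C^k$-operator norm for every $k$. Once that is granted, the argument reduces to the tube lemma together with the observation about the shape of the perturbation, so I do not anticipate any serious obstacle.
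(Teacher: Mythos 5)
Your proof is correct, and your treatment of the second assertion coincides with the paper's: when $G$ acts trivially on $M$, the perturbation $\imath^*(\sum_j a_jx_j)$ pulled back to $N\times M$ depends only on the $M$-variable, hence is $G$-invariant, and together with the $G$-invariance of $F$ this makes $\tilde F$ invariant. For the first assertion your route is genuinely different from the paper's. The paper exploits the additive form of $\tilde F$: since $g\cdot F=F$, one has $F-g\cdot\tilde F=-g\cdot\bigl(\imath^*(\sum_j a_jx_j)\bigr)=-\sum_j a_j\,g\cdot(\imath^*x_j)$, and $\sup_{g\in G}\|\sum_j a_j\,g\cdot(\imath^*x_j)\|_{C^2}$ is made small by taking $(a_j)$ small, using compactness of $G$ and $M$. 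You instead give a soft argument via joint continuity of the action map $\Phi$ and the tube lemma, which does not use the explicit shape of the perturbation at all and would survive any proof of Proposition \ref{morse} that merely produces $\tilde F$ arbitrarily close to $F$. Both arguments have essentially the same length; the paper's is more elementary and self-contained, yours is structurally cleaner. One caution on your closing remark: joint continuity of $\Phi$ does \emph{not} follow from continuity of $g\mapsto L_{g^{-1}}^*$ in the operator norm on $C^2$ --- that statement is in fact false already for rotations of $S^1$ (test against $e^{in\theta}$). What your argument actually needs, and what is true, is the combination of uniform boundedness of the operator norms $\|L_{g^{-1}}^*\|$ over the compact group $G$ and strong-operator continuity, i.e.\ that $g\mapsto g\cdot H$ is continuous from $G$ into $C^2(N\times M)$ for each fixed smooth $H$; both follow from smoothness of the action and compactness of $G$ and $N\times M$, and a standard three-term estimate then gives joint continuity of $\Phi$.
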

\begin{proof}
First, the 2nd statement is obvious, because the added function $\imath^*(\sum_ia_ix_i)$ in the above proof is invariant under the $G$-action.

To prove the 1st statement, let's just use the left action notation for a function.
From the compactness of $G$ and $M$
it follows that
\begin{eqnarray*}
\sup_{\mathfrak{g}\in G}||F-\mathfrak{g}\cdot \tilde{F}||_{C^2} &=& \sup_{\mathfrak{g}\in G}||\mathfrak{g}\cdot(\imath^*(\sum_ja_jx_j))||_{C^2}\\
&=& \sup_{\mathfrak{g}\in G}||\sum_ja_j\ \mathfrak{g}\cdot(\imath^*x_j)||_{C^2}
\end{eqnarray*}
can be made arbitrarily small by taking $(a_1,\cdots,a_k)$ sufficiently small.
\end{proof}



\section{Spherical rearrangement}
Recall the definition of $S^m_V$ with $\textsl{g}_V={\left( \frac{V}{V_m} \right) }^{\frac{2}{m}}g_{_{\Bbb S}}$, and we let $r(q)\in [0, R_V]$ for $q\in S^m_V$ denote the distance from the south pole $q_0$ to $q$, where $R_V$ denotes the diameter of $S^m_V$.
In this section, $(M,g)$ is a smooth closed Riemannian $m$-manifold of volume $V$.

\subsection{Unparametrized spherical rearrangement}
For a Morse function $f$ on $(M,g)$, the spherical rearrangement of $f$ is the radially-symmetric continuous function $f_*:S^m_V\rightarrow \Bbb R$  such that
$\{q\in S^m_V| f_*(q)<t \}$ for any $t\in (\min(f), \max(f)]$ is the geodesic ball centered at $q_0$ with the same volume as $\{x\in M| f(x)<t \}$.  Namely $f_*$ is defined by that $f_* (q)=t$ for $q\in S^m_{V}$ if and only if the volume of the geodesic ball of radius $r(q)$ in $S^m_{V}$ equals $\mu(\{x\in M| f(x)<t \})$. By abuse of notation we regard $f_*$ also as a function on $[0, R_V]$ (or $\Bbb R$ by an obvious continuous extension).
Thus $f_*:[0,R_V]\rightarrow \Bbb R$ is strictly increasing.

If $t_0$ is a regular value of $f$ then the function $t\rightarrow \mu(\{p\in M| f(p)<t \})$ is smooth at $t_0$, and $f_*$ is smoothly differentiable around $f_*^{-1}(t_0)$ with $t_0$ being a regular value of $f_*$. We shall prove this in detail in the next subsection.
In this case note  that $|\nabla f_* |$ is constant along $f_*^{-1} (t_0)$ since $f_*$ is radially symmetric.

Note two important facts we shall often use. First, for any $k> 0$ and $(a,b)\subset \Bbb R$
\begin{eqnarray}\label{gorba}
\int_{\{x\in M| a<f(x)<b \}}f^k\ d\mu_g=\int_{\{q\in S^m_{V}|a<f_*(q)<b\}} (f_*)^k\ d\mu_{\textsl{g}_V}
\end{eqnarray}
where $(\cdot)^k$ means $|\cdot|^k$ if $k$ is not an integer. Secondly, the coarea formula :
$$ \int_{\{x\in M| a<f(x)<b \}} \psi\ d\mu_g = \sum_{i=0}^{l}\int_{t_{i}}^{t_{i+1}} \left( \int_{f^{-1} (t) }
\psi| \nabla f|^{-1} d\sigma_t \right) dt$$ for any $\psi\in L^1(M)$ where $d\sigma_t$ denotes the volume element of $f^{-1} (t)$ with the induced Riemannian metric,  $t_1<\cdots<t_l$ are all the critical values of $f$ lying in $(a,b)$, and we set $t_0:=a, t_{l+1}:=b$.
If no confusion arises, $d\sigma_t$ will always denote the induced volume element of the level-$t$ hypersurface of a function concerned.

Thus if $a$ is a regular value of $f$, then
\begin{eqnarray}\label{WTH}
\int_{f^{-1} (a)} {| \nabla f |}^{-1} d\sigma_{a} &=&\frac{d}{dt}|_{t=a}\mu (\{x\in M| f(x)<t \})\nonumber \\
&=&\frac{d}{dt}|_{t=a} \mu (\{q\in S^m_V| f_*(q) <t \})\nonumber \\
&=& \int_{f_*^{-1} (a)} {| \nabla f_* |}^{-1} d\sigma_{a}.
\end{eqnarray}

The followings are essential ingredients for later arguments.
\begin{Proposition}\label{Firstlady-1}
Let $$\mathcal{F}:=\{f:M\stackrel{C^\infty}{\rightarrow} \Bbb R|f \textrm{ is a Morse function.}\}$$ and
$$\tilde{\mathcal{F}}:=\{f_{*}: S^m_V\rightarrow \Bbb R|f\in \mathcal{F}\}$$ be endowed with $C^0$-topology.
Then the spherically-rearranging map $\Phi:\mathcal{F}\rightarrow \tilde{\mathcal{F}}$ given by $\Phi(f)=f_*$ is Lipschitz continuous with Lipschitz constant 1,
i.e. for any $f, g\in \mathcal{F}$ $$||g_{*}-f_{*}||_\infty\leq ||g-f||_\infty.$$
\end{Proposition}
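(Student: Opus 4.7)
The plan is to exploit the defining property of spherical rearrangement. Writing $V_h(t):=\mu(\{p\in M:h(p)<t\})$ and $V_*(r):=\mu(B_r(q_0))$ for the volume of the geodesic ball of radius $r$ in $S^m_V$, the function $h_*$ is characterized by the identity $V_*(r(q))=V_h(h_*(q))$. In other words, $\Phi$ factors through the monotone inversion of a distribution function, so controlling $||f_*-g_*||_\infty$ reduces to comparing $V_f$ with a horizontal shift of $V_g$.

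Set $c:=||f-g||_\infty$. The pointwise bound $g-c\leq f\leq g+c$ yields the sublevel-set sandwich
\[
\{g<t-c\}\;\subseteq\;\{f<t\}\;\subseteq\;\{g<t+c\},
\]
and taking Riemannian measures gives $V_g(t-c)\leq V_f(t)\leq V_g(t+c)$ for every $t\in\Bbb R$.

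Now fix $q\in S^m_V$ and substitute $t=f_*(q)$. Since $V_f(f_*(q))=V_*(r(q))=V_g(g_*(q))$, the previous display becomes
\[
V_g(f_*(q)-c)\;\leq\;V_g(g_*(q))\;\leq\;V_g(f_*(q)+c).
\]
The next step is to invert $V_g$: because $M$ is connected (a standing convention) and $g$ is continuous, $V_g$ is strictly increasing on the open interval $(\min g,\max g)$, so the inequalities collapse to $f_*(q)-c\leq g_*(q)\leq f_*(q)+c$.

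The main care required is at the endpoints of $[\min g,\max g]$, where strict monotonicity of $V_g$ fails. However, $||f-g||_\infty\leq c$ already forces $|\max f-\max g|\leq c$ and $|\min f-\min g|\leq c$, so $f_*(q)\pm c$ never exits $[\min g,\max g]$ in a manner that invalidates the inversion; in those boundary cases the desired one-sided bound is immediate from $g_*(q)\in[\min g,\max g]$. Hence $|f_*(q)-g_*(q)|\leq c$ for every $q$, and taking the supremum over $q$ gives the claimed Lipschitz bound.
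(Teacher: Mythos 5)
Your proof is correct and rests on the same key fact as the paper's: the sup-norm bound $c=\|f-g\|_\infty$ forces the sublevel-set inclusions $\{g<t-c\}\subseteq\{f<t\}\subseteq\{g<t+c\}$, which are then converted into a pointwise bound on the rearrangements via strict monotonicity of the relevant volume function. The paper packages this as a proof by contradiction (assuming $f_*(r_0)>g_*(r_0)+c$ and deriving $\mu(\{r<r_0\})<\mu(\{r<r_0\})$), whereas you invert the distribution function $V_g$ directly; the content is the same, and your endpoint discussion correctly covers the only place where that inversion could fail.
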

\begin{proof}
Suppose not.
Then  there exist $f$ and $g$ in $\mathcal{F}$ such that $$||g_{*}-f_{*}||_\infty>||g-f||_\infty.$$
So there exists $r_0\in (0,R_V)$ such that $$|g_{*}(r_0)-f_{*}(r_0)|>||g-f||_\infty.$$
($g_{*}$ and $f_{*}$ are understood as functions on $[0,R_V]$.)
Without loss of generality we may assume that $$f_{*}(r_0)> g_{*}(r_0)+||g-f||_\infty$$ holds, and we set $a:=g_{*}(r_0)$.
Then
\begin{eqnarray*}
\mu(\{y\in S^m_V|r(y)<r_0\}) &=& \mu(\{y\in S^m_V|g_{*}(r(y))< a  \}) \\ &=& \mu(\{x\in M|g(x)<  a  \})\\ &\leq& \mu(\{x\in M|f(x)<  a+||g-f||_\infty  \}) \\
&=& \mu(\{y\in S^m_V|f_{*}(r(y))<  a+||g-f||_\infty  \})\\ &<& \mu(\{y\in S^m_V|f_{*}(r(y))<  f_{*}(r_0) \})\\ &=& \mu(\{y\in S^m_V|r(y)<r_0\})
\end{eqnarray*}
which yields a contradiction.
\end{proof}

\begin{Proposition}\label{sejong-brt}
There exists a constant $C_1>0$ depending only on $(M,g)$ so that for any Morse function $f:M\rightarrow \Bbb R$, any interval $(a,b)$, and any $p\geq 1$
$$\int_{f_*^{-1}(a,b)} {| \nabla f_*| }^p\ d\mu_{\textsl{g}_V} \leq (C_1)^p \int_{f^{-1}(a,b)} {| \nabla f | }^p\ d\mu_g ,$$ and for any regular value $a_0=f_*(r_0)$ of $f$  $$|\nabla f_*(r_0)|\leq C_1\max_{f^{-1}(a_0)}|\nabla f|.$$
\end{Proposition}
\begin{proof}
To prove the 1st inequality we use the idea in \cite{Petean} where the case of $p=2$ is proved.
Let $t$ be a regular value of $f$. Applying H\"{o}lder's inequality to $$1={| \nabla f|}^{-\frac{1}{q}}{| \nabla f |}^{\frac{1}{q}}={| \nabla f|}^{-\frac{1}{q}}{| \nabla f |}^{\frac{p-1}{p}}$$ on $f^{-1} (t)$ where $\frac{1}{p}+\frac{1}{q}=1$, we have
\begin{eqnarray*}
\int_{f^{-1} (t)}| \nabla f|^{p-1} \  d\sigma_t  &\geq& {\left( \mu (f^{-1} (t)) \right)}^p
{\left( \int_{f^{-1} (t)} {| \nabla f |}^{-1} d\sigma_t \right)}^{-\frac{p}{q}}\nonumber \\
&=& {\left( \mu (f^{-1} (t)) \right)}^p {\left( \int_{f^{-1} (t)} {| \nabla f |}^{-1} d\sigma_t \right)}^{-p+1},
\end{eqnarray*}
where $\mu(\cdot)$ denotes $(m-1)$-dimensional volume.


Using the facts that $\{x\in M| f(x)< t \}$ is an open submanifold of $M$ with smooth boundary $f^{-1} (t)$, and the values of the isoperimetric profile on the round sphere are realized by geodesic balls, and (\ref{ibs-cnu}), we have
\begin{eqnarray}\label{book-repay}
\mu (f^{-1} (t)) &\geq&  I_{(M,g)}(\mu(\{x\in M| f(x)<t \})) \nonumber\\ &\geq& \frac{1}{C_1} I_{S^m_V}(\mu(\{x\in M| f(x)<t \})) \nonumber\\ &=& \frac{1}{C_1}\mu( f_*^{-1}(t))
\end{eqnarray}
for a constant $C_1>0$.

Now we combine the above two inequalities with the coarea formula and (\ref{WTH}) to get
\begin{eqnarray*}
\int_{f^{-1}(a,b)}  {| \nabla f | }^p\ d\mu_g &=& \sum_{i=0}^{l}\int_{t_{i}}^{t_{i+1}} \left( \int_{f^{-1} (t) }
| \nabla f|^{p-1} \ d\sigma_t \right) dt\\ &\geq& \sum_{i=0}^{l}\int_{t_{i}}^{t_{i+1}}
{\left( \mu (f^{-1} (t)) \right)}^p
{\left( \int_{f^{-1} (t)} {| \nabla f|}^{-1} d\sigma_t \right)}^{-p+1}dt\\
&\geq& {\left( \frac{1}{C_1} \right) }^{p}
\sum_{i=0}^{l}\int_{t_{i}}^{t_{i+1}}
{\left( \mu (f_*^{-1} (t)) \right)}^p
{\left( \int_{f_*^{-1} (t)} {| \nabla f_* |}^{-1} d\sigma_t \right)}^{-p+1}dt\\
&=&  {\left( \frac{1}{C_1} \right) }^{p}
 \sum_{i=0}^{l}\int_{t_{i}}^{t_{i+1}}  \left( \int_{f_*^{-1} (t) }
| \nabla f_* |^{p-1} \  d\sigma_t  \right)  dt\\
&=& {\left( \frac{1}{C_1} \right) }^{p}
\int_{f_*^{-1}(a,b)}  {| \nabla f_* | }^p\ d\mu_{\textsl{g}_V},
\end{eqnarray*}
where $t_i$ for $i=1,\cdots,l$ are critical values of $f$ on $(t_0,t_{l+1}):=(a,b)$, and the equality of the 4th line is due to the radial symmetry of $f_*$.

Although the 2nd inequality can be derived from the 1st inequality, we can directly get
\begin{eqnarray*}
|\nabla f_*(r_0)| &=& \frac{\mu(f_*^{-1}(a_0))}{\int_{f_*^{-1}(a_0)}|\nabla f_*|^{-1}d\sigma_{a_0}}\\
&=& \frac{\mu(f_*^{-1}(a_0))}{\int_{f^{-1}(a_0)}|\nabla f|^{-1}d\sigma_{a_0}}\\
&\leq&    \frac{\mu(f_*^{-1}(a_0))\int_{f^{-1}(a_0)}|\nabla f|\ d\sigma_{a_0}}{(\mu(f^{-1}(a_0)))^2}\\ &\leq&
C_1\frac{\int_{f^{-1}(a_0)}|\nabla f|\ d\sigma_{a_0}}{\mu(f^{-1}(a_0))}
\\ &\leq& C_1\max_{f^{-1}(a_0)}|\nabla f|.
\end{eqnarray*}

\end{proof}

When $M$ has positive Ricci curvature, we can explicitly compute the above constant $C_1$.
\begin{Proposition}\label{pet}
Suppose that $Ric_g\geq m-1$ for $m\geq 2$ and $f:M\rightarrow \Bbb R$ is a Morse function.
Then for any interval $(a,b)$ and any $p\geq 1$
$$\int_{f_*^{-1}(a,b)} {| \nabla f_*| }^p\ d\mu_{\textsl{g}_V} \leq {\left( \frac{V_m}{V} \right) }^{\frac{p}{m}} \int_{f^{-1}(a,b)} {| \nabla f | }^p\ d\mu_g,$$ and for any regular value $a_0=f_*(r_0)$ of $f$
$$|\nabla f_*(r_0)|\leq  {\left( \frac{V_m}{V} \right) }^{\frac{1}{m}}\max_{f^{-1}(a_0)}|\nabla f|.$$
\end{Proposition}
\begin{proof}
The proof is the same as the above proposition except for (\ref{book-repay}).
By using $Ric_g\geq m-1$ and the fact that the values of the isoperimetric function on the round sphere are realized by geodesic balls, we now have for any regular value $t$ of $f$
\begin{eqnarray}\label{uncultured}
\mu (f^{-1} (t)) &\geq&  V \ h_{(M,g)} \left( \frac{\mu(\{x\in M| f(x)<t \})}{V} \right)\nonumber\\ &\geq&  V \ h_{(S^m,g_{_{\Bbb S}})} \left( \frac{\mu(\{x\in M| f(x)<t \})}{V} \right)\nonumber\\ &=& V{\left( \frac{V}{V_m} \right) }^{\frac{1}{m}}h_{S^m_{V}} \left( \frac{\mu(\{y\in S^m_V| f_*(y)<t \})}{V} \right)\nonumber\\ &=& {\left( \frac{V}{V_m} \right) }^{\frac{1}{m}} \mu( f_*^{-1}(t)).
\end{eqnarray}


\end{proof}

\begin{Corollary}\label{itismyfault}
Suppose that $Ric_g\geq k$ for a constant $k>0$ and $f:M\rightarrow \Bbb R$ is a Morse function.
Then for any interval $[a,b]$ and any $p\geq 1$
$$\int_{f_*^{-1}(a,b)} {| \nabla f_*| }^p\ d\mu_{\textsl{g}_V} \leq {\left( \frac{V_m}{V'} \right) }^{\frac{p}{m}} \int_{f^{-1}(a,b)} {| \nabla f | }^p\ d\mu_g,$$ and for any regular value $a_0=f_*(r_0)$ of $f$
\begin{eqnarray}\label{itismyfault-1}
|\nabla f_*(r_0)|\leq  {\left( \frac{V_m}{V'} \right) }^{\frac{1}{m}}\max_{f^{-1}(a_0)}|\nabla f|
\end{eqnarray}
where $V'$ denotes $(\frac{k}{m-1})^{\frac{m}{2}}\ V$.
\end{Corollary}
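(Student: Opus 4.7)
The plan is to reduce Corollary \ref{itismyfault} to Proposition \ref{pet} by a constant rescaling of the metric. Set $\lambda := \frac{k}{m-1}$ and $\tilde{g} := \lambda g$. Since the Ricci tensor, viewed as a $(0,2)$-tensor, is invariant under constant rescaling of the metric, the hypothesis $Ric_g \geq k$ becomes $Ric_{\tilde{g}}(v,v) = Ric_g(v,v) \geq k\,g(v,v) = (m-1)\,\tilde{g}(v,v)$. Hence $(M,\tilde{g})$ satisfies the hypothesis of Proposition \ref{pet}. Its volume is $\mu_{\tilde{g}}(M) = \lambda^{m/2}V = V'$, so the spherical rearrangement produced by Proposition \ref{pet} lives on $S^m_{V'}$.

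Next I would identify the two spherical rearrangements. The identity map $(S^m,\textsl{g}_V)\to (S^m,\textsl{g}_{V'})$ is a homothety multiplying the metric by $\lambda$, and hence a diffeomorphism that sends each geodesic ball of $\textsl{g}_V$-volume $v$ around $q_0$ to the geodesic ball of $\textsl{g}_{V'}$-volume $\lambda^{m/2}v$ around $q_0$. Since $\mu_{\tilde{g}}(\{f<t\}) = \lambda^{m/2}\mu_g(\{f<t\})$, this homothety identifies the $\tilde{g}$-spherical rearrangement $f_*^{\tilde{g}}:S^m_{V'}\to\Bbb R$ with the $g$-spherical rearrangement $f_*=f_*^g:S^m_V\to\Bbb R$ of the statement; in particular, regular values of $f$ and the radial parameter $r_0$ are preserved in the obvious way.

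The remaining step is to translate both inequalities of Proposition \ref{pet}, applied on $(M,\tilde{g})$, back to $g$ via the rescalings
$$|\nabla_{\tilde{g}}f|_{\tilde{g}}^{2} = \lambda^{-1}|\nabla_g f|_g^{2},\qquad d\mu_{\tilde{g}} = \lambda^{m/2}\,d\mu_g,$$
and the analogous formulas on the sphere for $f_*^{\tilde g}$ vs.\ $f_*$. For the first inequality, both sides acquire a factor $\lambda^{m/2-1}$ which cancels, while the isoperimetric constant on the right changes from $(V'/V_m)^{2/m}$ (the volume of $(M,\tilde g)$) and stays in that form, giving exactly the stated bound. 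For the pointwise inequality, the two gradient rescalings each contribute $\lambda^{-1/2}$ and again cancel, leaving the factor $(V_m/V')^{1/m}$ as required.

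I do not anticipate any real obstacle here; the only bookkeeping that needs care is the identification of the rearrangements under the homothety and making sure the volume exponents track correctly. Once that is in place, the corollary is an essentially formal consequence of Proposition \ref{pet}.
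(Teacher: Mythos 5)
Your proof is correct and follows essentially the same route as the paper's: rescale the metric by $\lambda=\frac{k}{m-1}$ to reduce to the normalized case $Ric\geq m-1$, observe that the rearrangement is the same function on $S^m$ up to scaling the round metric by $\lambda$, and check that the factors of $\lambda$ cancel in each inequality. The paper states this more tersely but performs exactly this bookkeeping.
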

\begin{proof}
Set $g':=\frac{k}{m-1}g$. The spherical rearrangement of $f$ on $(M,g')$ is the same function $f_*$ on the $m$-sphere but with metric $\textsl{g}_{V'}=\frac{k}{m-1}\textsl{g}_V$.
Since the Ricci curvature of $g'$ is not less than $m-1$, one can apply the above proposition to $(M,g')$ with volume $V'$ and $S^m_{V'}$ to get
\begin{eqnarray*}
\int_{f^{-1}(a,b)} | \nabla f |_g^p\ d\mu_g &=& \left(\frac{m-1}{k}\right)^{\frac{m-p}{2}} \int_{f^{-1}(a,b)} | \nabla f |_{g'}^p\ d\mu_{g'}\\
 &\geq&  \left(\frac{m-1}{k}\right)^{\frac{m-p}{2}} {\left( \frac{V'}{V_m} \right) }^{\frac{p}{m}}
\int_{f_*^{-1}(a,b)} | \nabla f_*|_{\textsl{g}_{V'}}^p d\mu_{\textsl{g}_{V'}}\\ &=&  {\left( \frac{V'}{V_m} \right) }^{\frac{p}{m}}
\int_{f_*^{-1}(a,b)} | \nabla f_*|_{\textsl{g}_{V}}^p d\mu_{\textsl{g}_{V}},
\end{eqnarray*}
and the 2nd inequality is obtained in a similar way or from the 1st inequality by letting $b\rightarrow a$.
\end{proof}

\subsection{Fiberwise spherical rearrangement}

Continuing the above discussion, we extend it to the fiberwise spherical rearrangement in a Riemannian product $(N\times M, h+g)$ where $(N,h)$ is a smooth closed Riemannian manifold as a parameter space. For a smooth generic-fiberwise Morse function $F :N\times M \rightarrow \Bbb R$, we let $N_0=N_0(F)$ be the open dense subset of $N$ such that $F_s$ for any $s\in N_0$ is a Morse function on $M$, and define $$F_{\bar{*}}:N_0\times S^m_V\rightarrow \Bbb R$$ as the fiberwise spherical rearrangement of $F$; namely, $F_{\bar{*}s}:=(F_{\bar{*}})_s$ for any $s\in N_0$ is equal to $(F_s)_{*}$.

For now $F_{\bar{*}}$ is defined only for $N_0\times S^m_V$, but we shall later extend it to a continuous function on $N\times S^m_V$.
As before, $F_{\bar{*}s}: S^m_V\rightarrow \Bbb R$ for each $s\in N_0$ is radially symmetric, and  we regard $F_{\bar{*}}$ also as a function on $N_0\times [0, R_V]$ by abuse of notation.

Let's first show that $F_{\bar{*}s}$ is smooth as far as $F_s$ is regular.
To do it, we need 3 functions $$\bar{F} :N\times M \rightarrow N\times \Bbb R,$$  $$\mathcal{V}:N_0\times \Bbb R\rightarrow \Bbb R,$$ $$\bar{\mathcal{V}}:N_0\times \Bbb R\rightarrow N_0\times \Bbb R$$
defined by $$\bar{F}(s,x):=(s,F(s,x)),\ \ \ \mathcal{V}(s,t):=\mu(\{p\in M| F(s,p)<t \}),\ \ \ \bar{\mathcal{V}}(s,t):=(s,\mathcal{V}(s,t))$$
respectively. Since $F_s$ for $s\in N_0$ is a Morse function on $M$, $\mathcal{V}(s,t)$ is the same as $\mu(\{p\in M| F(s,p)\leq t \})$ and $\mathcal{V}$ restricted to each $\{s\}\times \textrm{Im}(F_s)$ is a strictly increasing function, so $\bar{\mathcal{V}}$ is 1-1 on $\cup_{s\in N_0}(\{s\}\times \textrm{Im}(F_s))$ allowing $\bar{\mathcal{V}}^{-1}$.

Let $A(r)$ for $r\in [0, R_V]$ be the volume of a geodesic ball of radius $r$ in $S^m_V$. Obviously $A$ is a continuous function which is also smooth and regular on $(0, R_V)$ as seen in the explicit computation of $A(r)$.(In fact, $A'(r)$ is given by the $(m-1)$-dimensional volume of a geodesic sphere of radius $r$ in $S^m_V$.)  By our definitions,
\begin{eqnarray}\label{BJY}
A(r)=\mathcal{V}(s,F_{\bar{*}}(s,r))
\end{eqnarray}
for any $s\in N_0$ and $r\in [0,R_V]$.

\begin{Proposition}
$\mathcal{V}$ is continuous on $N_0\times \Bbb R$.
\end{Proposition}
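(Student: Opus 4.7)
The plan is to prove joint continuity at an arbitrary fixed $(s_0,t_0)\in N_0\times \Bbb R$ by first establishing continuity of $\mathcal{V}(s_0,\cdot)$ in $t$, then using uniform continuity of $F$ on the compact product $N\times M$ to sandwich $\mathcal{V}(s,t)$ between translates of $\mathcal{V}(s_0,\cdot)$.

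First I would observe that since $F_{s_0}$ is a Morse function on the compact manifold $M$, its critical set is finite, and every level set $F_{s_0}^{-1}(t)$ is the union of at most finitely many isolated critical points with a smooth codimension-one submanifold (regular points of $F_{s_0}$ on that level). Hence $\mu(F_{s_0}^{-1}(t))=0$ for every $t\in \Bbb R$. This gives $\mathcal{V}(s_0,t)=\mu(\{F_{s_0}\le t\})$. Then continuity of $\mathcal{V}(s_0,\cdot)$ follows from the continuity of a finite measure along monotone families of sets: for $t_n\uparrow t_0$ the sets $\{F_{s_0}<t_n\}$ increase to $\{F_{s_0}<t_0\}$, and for $t_n\downarrow t_0$ they decrease to $\{F_{s_0}\le t_0\}$, whose measures agree with $\mathcal{V}(s_0,t_0)$ because $\{F_{s_0}=t_0\}$ is a null set.

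Next, since $N$ and $M$ are smooth closed manifolds, $N\times M$ is compact and $F$ is uniformly continuous on it. Given $\varepsilon>0$, choose $\delta>0$ so that $d_N(s,s_0)<\delta$ implies $|F(s,p)-F(s_0,p)|<\varepsilon$ for all $p\in M$. Then for such $s$ and any $t\in \Bbb R$ the inclusions
$$\{p\in M\mid F(s_0,p)<t-\varepsilon\}\subseteq \{p\in M\mid F(s,p)<t\}\subseteq \{p\in M\mid F(s_0,p)<t+\varepsilon\}$$
give $\mathcal{V}(s_0,t-\varepsilon)\le \mathcal{V}(s,t)\le \mathcal{V}(s_0,t+\varepsilon)$. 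If additionally $|t-t_0|<\varepsilon$, monotonicity of $\mathcal{V}(s_0,\cdot)$ in $t$ yields
$$\mathcal{V}(s_0,t_0-2\varepsilon)\le \mathcal{V}(s,t)\le \mathcal{V}(s_0,t_0+2\varepsilon).$$

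Finally, letting $\varepsilon\to 0$ and invoking the continuity of $\mathcal{V}(s_0,\cdot)$ at $t_0$ from the first step, both bounds converge to $\mathcal{V}(s_0,t_0)$, proving joint continuity at $(s_0,t_0)$. The only delicate point is the measure-zero assertion for level sets, which is precisely where the hypothesis $s_0\in N_0$ (so that $F_{s_0}$ is Morse) enters; without it, $\mathcal{V}(s_0,\cdot)$ could have jump discontinuities at critical values of $F_{s_0}$ carrying positive-measure level sets.
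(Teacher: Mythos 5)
Your proof is correct and follows essentially the same route as the paper: first establish continuity of $\mathcal{V}(s_0,\cdot)$ in $t$ (which, as you make explicit, rests on the Morse condition forcing level sets $F_{s_0}^{-1}(t)$ to be null), then use uniform continuity of $F$ on the compact $N\times M$ to sandwich $\mathcal{V}(s,t)$ between translated values of $\mathcal{V}(s_0,\cdot)$. The only difference is that you spell out the measure-theoretic reason $\mathcal{V}_{s_0}$ has no jumps, whereas the paper states it more tersely.
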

\begin{proof}
Let's show the continuity of $\mathcal{V}$ at $(s_0,t_0)\in N_0\times \Bbb R$.
Since $F_{s_0}$ is a Morse function on $M$, an increasing function $\mathcal{V}_{s_0}$ on $\{s_0\}\times \textrm{Im}(F_{s_0})$ cannot have a jump discontinuity, and hence it is continuous. So for any $\epsilon>0$ there exists $\tau>0$ such that $$|\mathcal{V}(s_0,t_0+\tau)-\mathcal{V}(s_0,t_0-\tau)|<\epsilon.$$
By the uniform continuity of $F$, there also exists $\delta>0$ such that $$\textrm{dist}_{N\times M}((s,x),(s',x'))<\delta \ \ \ \Rightarrow  \ \ \ |F(s,x)-F(s',x')|<\frac{\tau}{2}.$$ We denote the distance function on a Riemannian manifold $X$ by $\textrm{dist}_X$.

Now if $|t-t_0|<\frac{\tau}{2}$ and $\textrm{dist}_{N}(s,s_0)<\delta$, then
\begin{eqnarray*}
\{p\in M|F(s_0,p)<t_0-\tau\} &\subseteq& \{p\in M|F(s_0,p)<t-\frac{\tau}{2}\} \\ &\subseteq& \{p\in M|F(s,p)<t\} \\ &\subseteq& \{p\in M|F(s_0,p)<t+\frac{\tau}{2}\} \\ &\subseteq& \{p\in M|F(s_0,p)<t_0+\tau\}
\end{eqnarray*}
so that both of $\{p\in M|F(s,p)<t\}$ and $\{p\in M|F(s_0,p)<t_0\}$  are contained in $\{p\in M|F(s_0,p)<t_0+\tau\}$ and contain $\{p\in M|F(s_0,p)<t_0-\tau\}$, and hence
$$|\mathcal{V}(s,t)-\mathcal{V}(s_0,t_0)|<\mathcal{V}(s_0,t_0+\tau)-\mathcal{V}(s_0,t_0-\tau)<\epsilon.$$ So the continuity of $\mathcal{V}$ is proved.
\end{proof}

\begin{Lemma}\label{hyunkeun}
Suppose $t_0\in \Bbb R$ is a regular value of $F_{s_0}$ for $s_0\in N_0$. Then $\mathcal{V}$ is smooth around $(s_0,t_0)$ and $\mathcal{V}(s_0,t_0)$ is a regular value of $\mathcal{V}_{s_0}$.
\end{Lemma}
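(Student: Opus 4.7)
The plan is to straighten a tubular neighborhood of the regular level set $K:=F_{s_0}^{-1}(t_0)$, use the implicit function theorem parametrically to represent the nearby sublevel sets $\{F_s<t\}$ as subgraphs over $K$, and then read smoothness of $\mathcal{V}$ and positivity of $\partial_t\mathcal{V}$ off a single Fubini-type volume formula.

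First, since $F_{s_0}$ is Morse on the compact $M$, I pick $\delta>0$ so that $(t_0-2\delta,t_0+2\delta)$ is disjoint from the (finite) set of critical values of $F_{s_0}$, and set $U:=F_{s_0}^{-1}((t_0-\delta,t_0+\delta))$. By joint continuity of $F$ and $\nabla_M F$ on $N\times M$ and compactness of $\overline{U}$, I obtain a neighborhood $W\subseteq N_0$ of $s_0$ and $c>0$ such that $|\nabla_M F_s|\geq c$ on $\overline{U}$ and $\|F_s-F_{s_0}\|_{C^0(M)}<\delta/2$ for every $s\in W$. Because $F_{s_0}\colon U\to(t_0-\delta,t_0+\delta)$ is a proper submersion, Ehresmann's theorem (equivalently, the flow of $\nabla F_{s_0}/|\nabla F_{s_0}|^2$) supplies a diffeomorphism $\Psi\colon K\times(-\delta,\delta)\to U$ with $F_{s_0}\circ\Psi(k,\tau)=t_0+\tau$. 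Setting $G(s,t,k,\tau):=F(s,\Psi(k,\tau))-t$ gives $G(s_0,t_0,k,0)\equiv 0$ and $\partial_\tau G(s_0,t_0,k,0)\equiv 1$ for all $k\in K$, so the parametric implicit function theorem, made uniform by compactness of $K$, yields a smaller neighborhood $W'\times J'\subseteq W\times\Bbb R$ of $(s_0,t_0)$ and a smooth function $\psi\colon W'\times J'\times K\to(-\delta,\delta)$ with $G(s,t,k,\psi(s,t,k))=0$.

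Shrinking $W'\times J'$ further if needed, the $C^0$-bound above forces $\{F_{s_0}<t_0-\delta\}\subseteq\{F_s<t\}$ and $\{F_{s_0}>t_0+\delta\}\cap\{F_s<t\}=\emptyset$ for all $(s,t)\in W'\times J'$, so inside $U$ the sublevel set $\{F_s<t\}$ is exactly the subgraph $\Psi(\{\tau<\psi(s,t,k)\})$. Writing $\Psi^*d\mu_g=J(k,\tau)\,d\tau\,d\sigma_{t_0}(k)$ for a smooth positive density $J$, Fubini gives
\[
\mathcal{V}(s,t)=\mu(\{F_{s_0}<t_0-\delta\})+\int_K\!\int_{-\delta}^{\psi(s,t,k)}J(k,\tau)\,d\tau\,d\sigma_{t_0}(k),
\]
and smoothness of the integrand in $(s,t)$, together with compactness of $K$, permits differentiation under the integral, giving $\mathcal{V}\in C^\infty(W'\times J')$. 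The standard coarea formula then yields $\partial_t\mathcal{V}(s_0,t_0)=\int_K|\nabla F_{s_0}|^{-1}\,d\sigma_{t_0}>0$; combined with the strict monotonicity of $\mathcal{V}_{s_0}$ on $\mathrm{Im}(F_{s_0})$, this shows $\mathcal{V}(s_0,t_0)$ is a regular value of $\mathcal{V}_{s_0}$.

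The principal obstacle is the simultaneous straightening plus parametric IFT: one must verify that $\Psi$ extends across the \emph{full} compact level $K$ (Ehresmann over the contractible interval $(-\delta,\delta)$, or uniform estimates on the gradient flow) and that the implicit function $\psi$ is smooth \emph{uniformly} in $k\in K$ and in $(s,t)$ on a common product neighborhood. Only this joint uniformity legitimizes Fubini and differentiation under the integral; without it one obtains at best fiberwise smoothness, not joint smoothness of $\mathcal{V}$ in $(s,t)$.
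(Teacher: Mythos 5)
Your proof is correct and takes a genuinely different route from the paper's. The paper establishes smoothness of $\mathcal{V}$ by a purely local argument: it covers $N\times M$ by two kinds of open sets---local cubical charts, supplied by the local submersion theorem (Sublemma~\ref{Kangkyungwha}), in which $\bar{F}$ becomes the coordinate projection near $\bar{F}^{-1}((s_0,t_0))$, plus open sets whose closures avoid that level---and then writes $\mathcal{V}=\sum_i\mathcal{V}_i$ via a compactly supported partition of unity, with each $\mathcal{V}_i$ a manifestly smooth local iterated integral. You instead construct a single global tubular neighborhood $\Psi\colon K\times(-\delta,\delta)\to U$ of the compact level set $K=F_{s_0}^{-1}(t_0)$ via the normalized gradient flow, invoke the parametric implicit function theorem (uniformly over the compact $K$) to represent nearby level sets of $F_s$ as graphs $\tau=\psi(s,t,k)$, and extract smoothness of $\mathcal{V}$ from a single Fubini integral with a smooth moving boundary. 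The paper's route avoids any global product structure at the cost of more chart-by-chart bookkeeping; yours is more geometric and compact, at the cost of the uniformity issues you correctly flag. One small step in your write-up is underjustified: concluding that inside $U$ the sublevel set $\{F_s<t\}$ is \emph{exactly} the subgraph $\{\tau<\psi(s,t,k)\}$ requires monotonicity of $\tau\mapsto F(s,\Psi(k,\tau))$, not merely the pointwise solvability the implicit function theorem provides. This is easily supplied: since $\partial_\tau F(s_0,\Psi(k,\tau))\equiv 1$ identically in $(k,\tau)$, continuity and compactness let you shrink $W'$ so that $\partial_\tau F(s,\Psi(k,\tau))>1/2$ uniformly on the compact band containing all level sets $\{F_s=t\}$ with $(s,t)\in W'\times J'$, after which the subgraph description follows. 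The regularity part of the argument, via the coarea formula, is identical to the paper's.
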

\begin{proof}
Let us first prove the 2nd statement which is easier.
To show the regularity of $\mathcal{V}$, we use the coarea formula to rewrite
\begin{eqnarray}\label{Hitchin-Seminar}
\mathcal{V}(s,t)=\mu(F^{-1}_s(-\infty,t))=\int_{-\infty}^t\int_{F^{-1}_s(\tau)}|\nabla^M F_s|^{-1}d\sigma_{\tau}d\tau
\end{eqnarray}
where $\nabla^M$ means the fiberwise gradient on $\{s\}\times M$. Since $t_0$ is a regular value of $F_{s_0}$ and $M$ is compact, there exists an $\varepsilon>0$ such that $(t_0-\varepsilon, t_0+\varepsilon)$ does not contain a critical value of $F_{s_0}$. So we can safely take the derivative
\begin{eqnarray*}
\frac{d}{dt}|_{t=t_0}\mathcal{V}(s_0,t)&=& \frac{d}{dt}|_{t=t_0}\int_{t_0-\varepsilon}^t\int_{F^{-1}_{s_0}(\tau)}|\nabla^M F_{s_0}|^{-1}d\sigma_{\tau}d\tau\\ &=& \int_{F^{-1}_{s_0}(t_0)}|\nabla^M F_{s_0}|^{-1}d\sigma_{t_0}>0,
\end{eqnarray*}
thereby proving that $t_0$ is regular point of $\mathcal{V}_{s_0}$.

The 1st statement is not obtained in a similar way.
It is not obvious whether one can simply take the derivative of (\ref{Hitchin-Seminar}) with respect to $s$ due to the presence of zeros of $|\nabla^M F_s|$ in the integration domain. Moreover the zero set is also changing as $s$ changes. To avoid such complications, we take a different approach.

Since $(s_0,t_0)$ is a regular value of $\bar{F}$, and $N\times M$ is compact, there exists its open neighborhood consisting of regular values of $\bar{F}$, and hence each $\bar{F}^{-1}((s,t))$ for $(s,t)$ near $(s_0,t_0)$ in $N_0\times \Bbb R$ is an embedded submanifold (without boundary). The key idea for proof is that one can locally straighten out each $\bar{F}^{-1}((s,t))$ simultaneously in some local chart so that $\bar{F}$ looks like a standard projection map.
To show the smoothness of $\mathcal{V}$, we may identify a neighborhood of $s_0$ in $N_0$ with a subset $\mathcal{U}$ in $\Bbb R^n$ (via a local chart).
\begin{Sublemma}\label{Kangkyungwha}
For any $p\in \bar{F}^{-1}((0,t_0))$ we can take a coordinate neighborhood $U_p\subset \mathcal{U}\times M$ of $p$ with a coordinate map $\psi_p:U_p\rightarrow \Bbb R^{n+m}$ such that  $$\bar{F}\circ\psi_p^{-1}:\psi_p(U_p)\rightarrow  \mathcal{U}\times (t_0-\epsilon,t_0+\epsilon)$$ is the standard projection map $\Pi_{n+1}:\Bbb R^{n+m}\rightarrow \Bbb R^{n+1}$ onto the first $n+1$ coordinates.
\end{Sublemma}
\begin{proof}
This is the consequence of the canonical submersion theorem whose proof can be found in \cite{pollack}.


\end{proof}

Now for any $q\in N\times M-\bar{F}^{-1}((s_0,t_0))$, we take an open neighborhood $U_q\subset N\times M$ of $q$  such that its closure $\overline{U_q}$ satisfies
\begin{eqnarray*}
\overline{U_q}\cap \bar{F}^{-1}((s_0,t_0))=\emptyset.
\end{eqnarray*}

The union of all such $U_q$'s and also $U_p$'s for $p\in\bar{F}^{-1}((s_0,t_0))$ as in the above sublemma gives an open cover of a compact manifold $N\times M$.
Let $\cup_{i=1}^lU_i$ be its finite subcover indexed such that $U_i$ for $i=1,\cdots,l'$ is of $U_p$-type and  $U_i$ for $i=l'+1,\cdots,l$ is of $U_q$-type.

Take a smooth partition $\{\rho_i|i=1,\cdots,l\}$ of unity subordinate to $\{U_i|i=1,\cdots,l\}$ where each $\textrm{supp}(\rho_i)$ is compact.
By using $\sum_{i=1}^l\rho_i\equiv1$,
\begin{eqnarray*}
\mathcal{V}(s,t) &=& \int_{F_s^{-1}(-\infty,t)}d\mu_g \\ &=& \int_{F_s^{-1}(-\infty,t)}\sum_{i=1}^l\rho_i\ d\mu_g \\&=& \sum_{i=1}^l\int_{U_i\cap F_s^{-1}(-\infty,t)}\rho_i\ d\mu_g\\ &=&
\sum_{i=1}^l\mathcal{V}_i(s,t)
\end{eqnarray*}
where the last equality just states the definition of $\mathcal{V}_i(s,t)$.

Since $(s_0,t_0)$ does not belong to a compact subset $\bar{F}(\cup_{i=l'+1}^l\overline{U_i})$, there exists an open neighborhood of $(s_0,t_0)$, with which $\bar{F}(\cup_{i=l'+1}^l\overline{U_i})$ has no intersection.
Thus $\mathcal{V}_i$ for $i=l'+1,\cdots,l$ is constant in the neighborhood, and hence to show the smoothness of $\mathcal{V}$ around $(s_0,t_0)$, it is enough to show that each $\mathcal{V}_i$ for $i=1,\cdots,l'$ is smooth there.

To do it, we express each $\mathcal{V}_i$ for $i=1,\cdots,l'$ using the local coordinate $$\psi_i(U_i)=\{(x_1,\cdots,x_{n+m})\in \Bbb R^{n+m}|a_j<x_j<b_j, \forall j\}$$ given in the above sublemma.
Indeed  $\mathcal{V}_i((x_1,\cdots,x_n),t)$ is written as
$$\int_{a_{n+m}}^{b_{n+m}}\cdots\int_{a_{n+2}}^{b_{n+2}}\int_{a_{n+1}}^{t}\rho_i\circ \psi_i^{-1}(x_1,\cdots,x_{n+m})\ \xi_i(x_1,\cdots,x_{n+m})\ dx_{n+1}dx_{n+2}\cdots dx_{n+m}$$
where $\xi_i(x_{1},\cdots,x_{n+m})dx_{1}\cdots dx_{n+m}$ is the volume element of $(N\times M,h+g)$ expressed in $\psi_i(U_i)$. Because $\rho_i$ is compact-supported in $U_i$, one can freely differentiate $\mathcal{V}_i$ arbitrarily times, that is, $\mathcal{V}_i$ is a smooth function of $(x_1,\cdots,x_n,t)\in \prod_{j=1}^{n+1}(a_{j},b_{j})$.
\end{proof}

\begin{Proposition}\label{mysister}
Suppose $t_0\in \Bbb R$ is a regular value of $F_{s_0}$ and let $F_{\bar{*}}(s_0,r_0)=t_0$ for $r_0\in (0,R_V)$. Then $F_{\bar{*}}$ is smooth in an open neighborhood of $(s_0,r_0)$ such that any point $(s,r)$ in the neighborhood is a regular point of $F_{\bar{*}s}=F_{\bar{*}}|_{\{s\}\times [0,R_V]}$.
\end{Proposition}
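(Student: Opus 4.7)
The strategy is to apply the implicit function theorem to the defining relation \eqref{BJY}, $A(r) = \mathcal{V}(s, F_{\bar{*}}(s, r))$. Consider the smooth function $G(s, r, t) := \mathcal{V}(s, t) - A(r)$ on an open subset of $N \times (0, R_V) \times \mathbb{R}$ containing $(s_0, r_0, t_0)$ on which $\mathcal{V}$ is smooth, as provided by Lemma \ref{hyunkeun}. At $(s_0, r_0, t_0)$ one has $G = 0$ by \eqref{BJY}, and $\partial_t G(s_0, r_0, t_0) = \partial_t \mathcal{V}(s_0, t_0) > 0$ by the regularity clause of Lemma \ref{hyunkeun}. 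The implicit function theorem then produces an open neighborhood $U \subseteq N \times (0, R_V)$ of $(s_0, r_0)$ and a unique smooth function $\tilde{F} : U \to \mathbb{R}$ with $\tilde{F}(s_0, r_0) = t_0$ and $\mathcal{V}(s, \tilde{F}(s, r)) = A(r)$ throughout $U$; shrinking $U$, I may assume in addition that $\partial_t \mathcal{V} > 0$ along the graph of $\tilde F$.

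Next I verify that $\tilde F$ agrees with $F_{\bar{*}}$ on $U \cap (N_0 \times (0, R_V))$, so that $\tilde F$ is a smooth extension of $F_{\bar{*}}$ across $(s_0, r_0)$. For $s \in N_0$ close to $s_0$, the Lipschitz bound $\|F_{\bar{*}s} - F_{\bar{*}s_0}\|_\infty \leq \|F_s - F_{s_0}\|_\infty$ of Proposition \ref{Firstlady-1}, together with the $C^0$-continuity of $s \mapsto F_s$ and the continuity of $F_{\bar{*}s_0}$ at $r_0$, yields $F_{\bar{*}}(s, r) \to t_0$ as $(s, r) \to (s_0, r_0)$ along $N_0 \times (0, R_V)$. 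Since both $\tilde F(s, r)$ and $F_{\bar{*}}(s, r)$ satisfy $\mathcal{V}(s, \cdot) = A(r)$ and lie near $t_0$, the strict monotonicity $\partial_t \mathcal{V} > 0$ in the uniqueness region of the IFT forces them to coincide on $U \cap (N_0 \times (0, R_V))$.

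Finally, differentiating $A(r) = \mathcal{V}(s, \tilde{F}(s, r))$ with respect to $r$ gives
$$A'(r) = \partial_t \mathcal{V}\bigl(s, \tilde F(s, r)\bigr) \cdot \partial_r \tilde F(s, r).$$
Since $A'(r) > 0$ for $r \in (0, R_V)$ (it equals the $(m-1)$-dimensional volume of a geodesic sphere of radius $r$ in $S^m_V$) and $\partial_t \mathcal{V} > 0$ on $U$, one concludes $\partial_r \tilde F > 0$ throughout $U$, so that every $(s, r) \in U$ is a regular point of $\tilde F_s = F_{\bar{*}s}$. The main delicacy in this plan is the identification $\tilde F = F_{\bar{*}}$, which requires enough local control on $F_{\bar{*}}$ along $N_0$ to keep its values inside the IFT's uniqueness region near $t_0$; Proposition \ref{Firstlady-1} supplies exactly this control, and the rest is a routine application of the implicit function theorem once Lemma \ref{hyunkeun} is in hand.
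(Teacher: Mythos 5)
Your proof is correct and follows essentially the same route as the paper: the paper applies the inverse function theorem to $\bar{\mathcal{V}}(s,t)=(s,\mathcal{V}(s,t))$ near $(s_0,t_0)$ (which is a local diffeomorphism precisely because $\partial_t\mathcal{V}(s_0,t_0)>0$ from Lemma \ref{hyunkeun}) and reads off smoothness and regularity from $(s,F_{\bar{*}}(s,r))=\bar{\mathcal{V}}^{-1}(s,A(r))$ together with $A'>0$, which is the same mechanism as your implicit-function-theorem argument for $G(s,r,t)=\mathcal{V}(s,t)-A(r)$. Your explicit verification via Proposition \ref{Firstlady-1} that $F_{\bar{*}}(s,r)$ stays in the uniqueness region, so that the IFT solution really is $F_{\bar{*}}$, is a point the paper's proof passes over silently, and it is a worthwhile addition rather than a deviation.
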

\begin{proof}
By the previous lemma, $(s_0,t_0)$ is a regular point of $\bar{\mathcal{V}}$ so that $\bar{\mathcal{V}}$ is a local diffeomorphism around $(s_0,t_0)$.
From the relation $$\bar{\mathcal{V}}(s, F_{\bar{*}}(s,r))=(s,A(r)),$$ it follows that
$$(s,F_{\bar{*}}(s,r))=\bar{\mathcal{V}}^{-1}((s,A(r)))$$ is a smooth function of $(s,r)$ around $(s_0,r_0)$ and $F_{\bar{*}s_0}$ is regular at $r_0$ by using the regularity of $A$ and $\bar{\mathcal{V}}^{-1}$. The regularity at one point implies the regularity in a neighborhood.
\end{proof}

Since $F_s$ for any $s\in N_0$ is a Morse function on $M$, the implicit function theorem guarantees that the critical locus
\begin{eqnarray}\label{LHW}
\mathcal{C}:=\{(s,x)\in N_0\times M| \nabla^M F_s(x)=0\}
\end{eqnarray}
where $\nabla^M$ is the fiberwise gradient on each $\{pt\}\times M$ is an $n$-dimensional submanifold of $N_0\times M$ such that $\mathcal{C}\cap (\{s\}\times M)$ for each $s\in N_0$ consists of finite points. Considering the differential $d\bar{F}:T(N\times M)\rightarrow T(N\times \Bbb R)$, the kernel of $d\bar{F}|_{T\mathcal{C}}$ is zero and hence $\bar{F}|_{\mathcal{C}}$ is an immersion. Thus  $\bar{F}(\mathcal{C})$ is an immersed hyupersurface of $N_0\times \Bbb R$ such that $\bar{F}(\mathcal{C})\cap (\{s\}\times \Bbb R)$ for each $s\in N_0$ consists of finite points.

\begin{Lemma}
$\bar{F}(\mathcal{C})$ is a relatively-closed subset of $N_0\times \Bbb R$.
\end{Lemma}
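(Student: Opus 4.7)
The plan is to verify relative closedness by a direct sequential argument, exploiting the compactness of $M$ and the continuity of the fiberwise gradient map.

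First, I would take an arbitrary sequence $(s_k, t_k) \in \bar{F}(\mathcal{C})$ which converges to some $(s_\infty, t_\infty) \in N_0 \times \mathbb{R}$, and aim to show $(s_\infty, t_\infty) \in \bar{F}(\mathcal{C})$. By definition, there exist points $x_k \in M$ with $(s_k, x_k) \in \mathcal{C}$, i.e.\ $\nabla^M F_{s_k}(x_k) = 0$, and $F(s_k, x_k) = t_k$.

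Next, since $M$ is compact, I would pass to a subsequence to obtain $x_k \to x_\infty$ for some $x_\infty \in M$. By the continuity of $F$ on $N \times M$, we have $F(s_\infty, x_\infty) = \lim_k F(s_k, x_k) = t_\infty$. The crucial observation is that the map $(s, x) \mapsto \nabla^M F_s(x)$, defined on all of $N \times M$, is continuous (indeed smooth) since $F$ is smooth; here one thinks of $\nabla^M F_s$ as a smooth section of the vertical tangent bundle of the trivial fibration $N \times M \to N$. Therefore
\[
\nabla^M F_{s_\infty}(x_\infty) \;=\; \lim_{k\to\infty} \nabla^M F_{s_k}(x_k) \;=\; 0.
\]

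Finally, because the limit $s_\infty$ lies in $N_0$, we conclude $(s_\infty, x_\infty) \in \mathcal{C}$, and hence $(s_\infty, t_\infty) = \bar{F}(s_\infty, x_\infty) \in \bar{F}(\mathcal{C})$. This proves that $\bar{F}(\mathcal{C})$ is closed in the subspace topology of $N_0 \times \mathbb{R}$. There is no serious obstacle; the only minor point to keep in mind is that closedness is asserted \emph{relatively} in $N_0 \times \mathbb{R}$ rather than in $N \times \mathbb{R}$, which is precisely what allows us to assume $s_\infty \in N_0$ and thereby use the Morse hypothesis on the fiber $F_{s_\infty}$ (though in fact we do not even need the Morse hypothesis for this particular closedness statement, only membership of $s_\infty$ in the domain of $\mathcal{C}$).
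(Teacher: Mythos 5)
Your proof is correct and works. You take a direct sequential route: lift the sequence $(s_k,t_k)\in\bar{F}(\mathcal{C})$ to points $(s_k,x_k)\in\mathcal{C}$, use compactness of $M$ to extract a convergent subsequence $x_k\to x_\infty$, and pass to the limit via the continuity (indeed smoothness) of $(s,x)\mapsto\nabla^M F_s(x)$ and of $F$ itself. The paper instead shows that the complement is open: for each $s_0\in N_0$ it takes an open $U\ni s_0$ with $\overline{U}\subset N_0$, notes that $\mathcal{C}\cap(\overline{U}\times M)$ is compact (being closed in the compact set $\overline{U}\times M$), hence $\bar{F}(\mathcal{C})\cap(\overline{U}\times\mathbb{R})=\bar{F}(\mathcal{C}\cap(\overline{U}\times M))$ is compact and closed, so $U\times\mathbb{R}-\bar{F}(\mathcal{C})$ is open. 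The two arguments use the same two ingredients -- compactness of $M$ and closedness of $\mathcal{C}$ in $N_0\times M$ -- so they are essentially equivalent, with yours phrased sequentially and the paper's topologically via a local exhaustion of $N_0$. Your version is a bit more explicit in that it surfaces the continuity of the fiberwise gradient (which the paper packages implicitly in the closedness of $\mathcal{C}$), and your closing remark that the Morse hypothesis is not actually used here is an accurate and worthwhile observation: what matters is only that $s_\infty$ lies in $N_0$ so that it remains in the domain where $\mathcal{C}$ is defined.
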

\begin{proof}
We show that $N_0\times \Bbb R-\bar{F}(\mathcal{C})$ is open.
For any $s\in N_0$, let's take an open neighborhood $U$ of $s$ whose closure $\overline{U}$ is contained in $N_0$. Since $\mathcal{C}$ which is the common zero set of $m$ smooth functions is a relatively-closed subset of $N_0\times M$,  $\mathcal{C}\cap (\overline{U}\times M)$ is compact. So $\bar{F}(\mathcal{C}\cap (\overline{U}\times M))=\bar{F}(\mathcal{C})\cap (\overline{U}\times \Bbb R)$ is compact and hence closed. Now
$$U\times \Bbb R-\bar{F}(\mathcal{C})=U\times \Bbb R-(\bar{F}(\mathcal{C})\cap (\overline{U}\times \Bbb R))$$ is open, by which the desired conclusion follows.
\end{proof}

We define $$\mathcal{C}^*:=\{(s,y)\in N_0\times S^m_V|(s,F_{\bar{*}}(s,y))\in \bar{F}(\mathcal{C})\}$$ and
use the notation $$U(r',r''):=\{y\in S^m_V|r'< r(y)< r''\}$$ for any $r',r''\in \Bbb R$.
By Proposition \ref{mysister}, $F_{\bar{*}}$ is smooth on $N_0\times S^m_V-\mathcal{C}^*$, and moreover we have :
\begin{Proposition}
$F_{\bar{*}}:N_0\times S^m_V\rightarrow \Bbb R$ is continuous, and $\mathcal{C}^*$ is a relatively-closed measure-zero subset of $N_0\times S^m_V$.
\end{Proposition}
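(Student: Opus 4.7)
The plan is to prove the three assertions in the natural order: continuity of $F_{\bar{*}}$ on $N_0\times S^m_V$ first, then relative closedness of $\mathcal{C}^*$ as a direct consequence of continuity, and finally its measure-zero property via a Fubini-slicing argument.

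For continuity at a point $(s_0,y_0)\in N_0\times S^m_V$, I would take any sequence $(s_k,y_k)\to(s_0,y_0)$ inside $N_0\times S^m_V$ (permissible because $N_0$ is open in $N$) and use the triangle bound
$$|F_{\bar{*}}(s_k,y_k)-F_{\bar{*}}(s_0,y_0)|\leq ||(F_{s_k})_*-(F_{s_0})_*||_\infty+|(F_{s_0})_*(y_k)-(F_{s_0})_*(y_0)|.$$
The second term tends to $0$ because $(F_{s_0})_*=F_{\bar{*}s_0}$ is, by construction, a continuous radially-symmetric function on $S^m_V$. For the first term, continuity of $F$ on the compact $N\times M$ gives $||F_{s_k}-F_{s_0}||_\infty\to 0$, and Proposition \ref{Firstlady-1} (applicable since the $F_{s_k}$ and $F_{s_0}$ are Morse) yields $||(F_{s_k})_*-(F_{s_0})_*||_\infty\leq ||F_{s_k}-F_{s_0}||_\infty\to 0$.

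Relative closedness then follows at once: the map $\Psi:N_0\times S^m_V\to N_0\times\Bbb R$ given by $\Psi(s,y):=(s,F_{\bar{*}}(s,y))$ is continuous by the previous step, and $\bar{F}(\mathcal{C})$ is relatively closed in $N_0\times\Bbb R$ by the preceding lemma, so $\mathcal{C}^*=\Psi^{-1}(\bar{F}(\mathcal{C}))$ is relatively closed in $N_0\times S^m_V$.

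For the measure-zero conclusion I would slice in $s$. For each fixed $s\in N_0$ the Morse function $F_s$ has only finitely many critical values $t_1,\dots,t_{k(s)}$, so $\bar{F}(\mathcal{C})\cap(\{s\}\times\Bbb R)$ is finite. The one small point to verify (and the main subtlety) is that $F_{\bar{*}s}$ is a strictly increasing function of the geodesic radius $r$, so that each level set $F_{\bar{*}s}^{-1}(t_i)$ is a single geodesic sphere in $S^m_V$; this holds because, by Sard's theorem applied to $F_s$ together with the coarea formula, $t\mapsto\mathcal{V}(s,t)=\mu(\{F_s<t\})$ is strictly increasing on the range of $F_s$, and the defining relation (\ref{BJY}) then forces monotonicity of $F_{\bar{*}s}$ in $r$. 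Consequently $\mathcal{C}^*\cap(\{s\}\times S^m_V)$ is a finite union of geodesic spheres and hence $\mu_{\textsl{g}_V}$-null. Since $\mathcal{C}^*$ is relatively closed and thus Borel-measurable, Tonelli's theorem yields
$$(\mu_h\times\mu_{\textsl{g}_V})(\mathcal{C}^*)=\int_{N_0}\mu_{\textsl{g}_V}\bigl(\mathcal{C}^*\cap(\{s\}\times S^m_V)\bigr)\,d\mu_h(s)=0,$$
completing the proof.
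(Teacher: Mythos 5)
Your proof is correct, and the continuity argument takes a genuinely different route from the paper's. The paper proves continuity by directly verifying that every sublevel set $F_{\bar{*}}^{-1}(-\infty,\alpha)$ and superlevel set $F_{\bar{*}}^{-1}(\alpha,\infty)$ is open, which requires a three-case analysis at points of $\mathcal{C}^*$ (smooth points via Proposition \ref{mysister}; critical points where $F_{\bar{*}}(p)$ is not extremal, handled by nudging to a nearby regular point and using radial monotonicity; and the extremal case, handled via continuity of $F$ itself). You instead split $|F_{\bar{*}}(s_k,y_k)-F_{\bar{*}}(s_0,y_0)|$ into a "vertical" term controlled by continuity of the single rearrangement $(F_{s_0})_*$ and a "horizontal" term controlled by the $1$-Lipschitz property of the unparametrized rearranging map (Proposition \ref{Firstlady-1}) together with uniform continuity of $F$ on the compact $N\times M$. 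This is essentially the mechanism the paper deploys later, in Proposition \ref{hanguk}, to get Lipschitz continuity under $Ric_g>0$; your observation is that for mere continuity the fiberwise Lipschitz bound (which is where the curvature hypothesis enters) is unnecessary, so the argument goes through without it. It is shorter and avoids the case analysis, at the cost of giving no information about level/sublevel sets that the paper reuses implicitly elsewhere. Your treatment of relative closedness (pulling back $\bar{F}(\mathcal{C})$ under the continuous map $(s,y)\mapsto(s,F_{\bar{*}}(s,y))$) and of the measure-zero claim (finite unions of geodesic spheres on each slice, then Tonelli) coincides with the paper's; your extra remark justifying strict radial monotonicity of $F_{\bar{*}s}$ is sound but redundant, since the paper establishes this in Section 4.1.
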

\begin{proof}
To show that $F_{\bar{*}}^{-1}(-\infty, \alpha)$ for any $\alpha\in \Bbb R$ is an open subset of $N_0\times S^m_V$,
let $p\in F_{\bar{*}}^{-1}(-\infty, \alpha)$.

If $p\in N_0\times S^m_V-\mathcal{C}^*$, then by Proposition \ref{mysister} there exists an open neighborhood $U(p)\subset N_0\times S^m_V$ of $p$ such that $F_{\bar{*}}$ is smooth on $U(p)$. So by taking sufficiently small $U(p)$, $F_{\bar{*}}(U(p))\subset (-\infty, \alpha)$.

If $p=(s_0,y_0)\in \mathcal{C}^*$ and $F_{\bar{*}}(p)$ is not the maximum of $F_{\bar{*}s_0}$, then by the continuity of $F_{\bar{*}s_0}$ one can take $p'=(s_0,y_0')\in F_{\bar{*}}^{-1}(-\infty, \alpha)$ satisfying $r(y_0')>r(y_0)$ and $p'\notin \mathcal{C}^*$. By the above case, we can take a neighborhood $$U(p'):=B_\epsilon(s_0)\times U(r(y_0')-\epsilon,r(y_0')+\epsilon)$$ for some $\epsilon>0$ such that $F_{\bar{*}}(U(p'))\subset  (-\infty, \alpha)$. Then $$F_{\bar{*}}(B_\epsilon(s_0)\times U(r(y_0)-\epsilon,r(y_0)+\epsilon))\subset  (-\infty, \alpha)$$ by the monotone-increasing property of $F_{\bar{*}s}$ for each $s$.

If $p=(s_0,y_0)\in \mathcal{C}^*$ and $F_{\bar{*}}(p)$ is the maximum of $F_{\bar{*}s_0}$, then it must be that there exists $\varepsilon>0$ such that $\alpha> \max F_{s}$ for any $s\in B_\varepsilon(s_0)$ by the continuity of $F$.
So for an open neighborhood $B_\varepsilon(s_0)\times S^m_V$ of $p$,  $$F_{\bar{*}}(B_\varepsilon(s_0)\times S^m_V)\subset (-\infty, \alpha).$$

In the similar way it is proved that
$F_{\bar{*}}^{-1}(\alpha,\infty)$ for any $\alpha\in \Bbb R$ is open.(In this case, for $p\in F_{\bar{*}}^{-1}(\alpha,\infty)$ one has to consider whether  $F_{\bar{*}}(p)$ is the minimum of $F_{\bar{*}s_0}$ or not.)  This finishes the proof of the continuity of $F_{\bar{*}}$, and hence $\mathcal{C}^*$ which is the inverse image of a relatively-closed subset $\bar{F}(\mathcal{C})$ of $N_0\times \Bbb R$ under a continuous map is relatively closed in $N_0\times S^m_V$.

The measure of $\mathcal{C}^*$ is computed as
\begin{eqnarray*}
\int\int_{N_0\times S^m_V}\chi_{_{\mathcal{C}^*}}\ d\mu_{h+\textsl{g}_V} &=& \int\int_{N_0\times S^m_V}\chi_{_{\mathcal{C}^*}}\ d(\mu_h\times \mu_{\textsl{g}_V})\\ &=& \int_{N_0}\left(\int_{S^m_V}\chi_{_{\mathcal{C}^*}}\ d\mu_{\textsl{g}_V}\right) d\mu_h\\ &=& \int_{N_0}0\ d\mu_h\\ &=& 0
\end{eqnarray*}
by using the fact that $\mathcal{C}^*\cap(\{s\}\times S^m_V)$ for each $s\in N_0$ is the union of a finite number of geodesic spheres and the two poles.

\end{proof}

Thus $N_0\times S^m_V-\mathcal{C}^*$ is dense in $N_0\times S^m_V$ and hence in $N\times S^m_V$. Summing up, we have proved that $F_{\bar{*}}$ is smooth on an open dense subset with measure-zero complement in $N\times S^m_V$. Next, we shall show that $F_{\bar{*}}$ can be extended to $N\times S^m_V$ continuously
by using the derivative estimate along $S^m_V$ direction obtained in Subsection 4.1.
\begin{Proposition}\label{hanguk}
Let $(M^m,g)$ with volume $V$ and $(N^n,h)$ be smooth closed Riemannian manifolds. Suppose that $F:N\times M\stackrel{C^\infty}{\rightarrow} \Bbb R$ is a generic-fiberwise Morse function. Then its fiberwise rearrangement $F_{\bar{*}}:N_0\times S^m_V\rightarrow \Bbb R$ extends to a Lipschitz continuous function on $N\times S^m_V$ such that it is radially symmetric in each $\{pt\}\times S^m_V$, and such an extension is unique independent of the choice of $N_0$.

In addition, if a Lie group $G$ acts on $(M,g)$ isometrically and on $N$ both by smooth right actions, then for any $\mathfrak{g}\in G$,
\begin{eqnarray}\label{Ocean-nuclear}
(F\cdot \mathfrak{g})_{\bar{*}}=F_{\bar{*}}\cdot \mathfrak{g}
\end{eqnarray}
where the induced $G$ action on $N\times S^m_V$ acts trivially on $S^m_V$. The analogous statement holds for a left action too.
\end{Proposition}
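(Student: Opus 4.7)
The plan is to combine Proposition \ref{Firstlady-1} (the $1$-Lipschitz property of spherical rearrangement in $C^0$-norm) with the gradient bound (\ref{itismyfault-1}) of Corollary \ref{itismyfault} to produce two Lipschitz estimates --- one in the parameter $s$ and one in the fiber variable --- that together show $F_{\bar{*}}$ is Lipschitz on $N_0\times S^m_V$. The extension to $N\times S^m_V$ will then come by uniform continuity. For the first bound, since $F\in C^\infty(N\times M)$ and $N\times M$ is compact, there exists $L>0$ with $||F_s-F_{s'}||_\infty\leq L\,\textrm{dist}_N(s,s')$ for all $s,s'\in N$, and Proposition \ref{Firstlady-1} then yields the same estimate for $F_{\bar{*}s}$ when $s,s'\in N_0$. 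For the second bound, $Ric_g>0$ on compact $M$ gives $Ric_g\geq k$ for some $k>0$, so (\ref{itismyfault-1}) applied fiberwise yields $|\nabla F_{\bar{*}s}(r_0)|\leq L':=(V_m/V')^{1/m}\sup_{N\times M}|\nabla^M F|$ at every $r_0\in(0,R_V)$ for which $F_{\bar{*}s}(r_0)$ is a regular value of $F_s$, with $V':=(k/(m-1))^{m/2}V$.

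The main care point is extracting a global Lipschitz-in-$r$ bound from this: (\ref{itismyfault-1}) only controls the derivative at regular values, but since $F_s$ is Morse the exceptional $r$-values in $[0,R_V]$ form a finite set, and $F_{\bar{*}s}:[0,R_V]\to\Bbb R$ is continuous, monotone, and smooth between consecutive exceptional points with derivative at most $L'$. A piecewise patching argument via the mean value theorem and the triangle inequality then shows $F_{\bar{*}s}$ is globally $L'$-Lipschitz on $[0,R_V]$, and by radial symmetry $L'$-Lipschitz on $S^m_V$, uniformly in $s\in N_0$. Combining with the Lipschitz-in-$s$ estimate via another triangle inequality shows $F_{\bar{*}}$ is Lipschitz on $N_0\times S^m_V$. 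For each $s\in N_0^c$ and any $s_i\in N_0$ with $s_i\to s$, the sequence $\{F_{\bar{*}s_i}\}$ is then Cauchy in $C^0(S^m_V)$ and converges to a unique limit which we declare to be $F_{\bar{*}s}$; this limit is radially symmetric (a uniform limit of radial functions is radial) and $L'$-Lipschitz on $S^m_V$. The joint Lipschitz estimate passes to the continuous extension, and uniqueness of the extension is automatic by density. Independence from the choice of $N_0$ follows similarly: given two admissible sets $N_0,N_0'$, their intersection is still open, dense, with measure-zero complement; the two constructions agree there, hence their continuous extensions coincide on $N\times S^m_V$.

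For the equivariance identity (\ref{Ocean-nuclear}) under a right $G$-action, I would verify the identity on the dense subset $N_0(F)\cdot\mathfrak{g}\times S^m_V$ and extend by continuity. For $s\in N_0(F)\cdot\mathfrak{g}$, the function $(F\cdot\mathfrak{g})_s$ is the pullback of the Morse function $F_{s\cdot\mathfrak{g}^{-1}}$ by the isometry $x\mapsto x\cdot\mathfrak{g}^{-1}$ of $(M,g)$, so its sublevel sets have identical volumes; consequently $((F\cdot\mathfrak{g})_s)_{*}=(F_{s\cdot\mathfrak{g}^{-1}})_{*}$, which is exactly $(F_{\bar{*}}\cdot\mathfrak{g})_s$ under the prescribed trivial action on $S^m_V$. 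Since both sides of (\ref{Ocean-nuclear}) are continuous on $N\times S^m_V$ by the first part of the argument, the identity propagates to the whole product; the left-action case is handled by the same argument with obvious changes.
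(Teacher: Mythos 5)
Your proposal is correct and follows essentially the same route as the paper's own proof: the $1$-Lipschitz property of the unparametrized rearrangement (Proposition \ref{Firstlady-1}) for the $N$-direction, the gradient bound (\ref{itismyfault-1}) for the $S^m_V$-direction, a triangle-inequality combination, extension by density, and verification of (\ref{Ocean-nuclear}) on the dense set $N_0(F)\cdot\mathfrak{g}$ using that the $G$-action on $(M,g)$ is isometric. Your extra care in patching the fiberwise Lipschitz bound across the finitely many critical radii is a welcome elaboration of a step the paper states more tersely, but it is not a different argument.
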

\begin{proof}
Since $N_0\times S^m_V$ is a dense subset, it's enough for the existence of a unique Lipschitz continuous extension to show that $F_{\bar{*}}$ is Lipschitz continuous on $N_0\times S^m_V$.
We already know that for each $s\in N_0$, a continuous function $F_{\bar{*}s}:S^m_V\rightarrow \Bbb R$ is piecewise-smooth and hence Lipschitz continuous. By Proposition \ref{sejong-brt}, a finite number $$C_1':=C_1\max \{|\nabla^M F_s(x)| \ | \ (s,x)\in N\times M\}$$ upper-bounds all the Lipschitz constants of $F_{\bar{*}s}$ for $s\in N_0$.

On the other hand, by Proposition \ref{Firstlady-1} we have
$$||F_{\bar{*}s}-F_{\bar{*}t}||_\infty\leq ||F_s-F_t||_\infty$$ for any $s,t\in N_0$.

Since $F$ is Lipschitz continuous on $N\times M$, there exists a constant $C_2>0$ such that for any $s,t\in N$
\begin{eqnarray*}
||F_s-F_t||_\infty &=& \sup_{x\in M} |F_s(x)-F_t(x)| \\ &\leq& \sup_{x\in M} C_2d_N(s,t) \\ &=& C_2 d_N(s,t)
\end{eqnarray*}
 where $d_N$ is the induced distance function on $(N,h)$.

Combining all the above, we deduce that for any $s,t\in N_0$ and $y,z\in S^m_V$,
\begin{eqnarray*}
|F_{\bar{*}}(s,y)-F_{\bar{*}}(t,z)| &\leq& |F_{\bar{*}}(s,y)-F_{\bar{*}}(t,y)|+|F_{\bar{*}}(t,y)-F_{\bar{*}}(t,z)| \\
&\leq& ||F_s-F_t||_\infty+C_1'd_S(y,z)\\  &\leq&  C_2 d_N(s,t)+C_1'd_S(y,z)\\
&\leq& \max(C_1,C_2)\sqrt{2}\ d((s,y),(t,z))
\end{eqnarray*}
where $d_S$ denotes the induced distance function on $(S^m_V,\textsl{g}_V)$ and $d$ is the usual product distance function on $N\times S^m_V$ given by $\sqrt{(d_N\circ \pi_1)^2+(d_S\circ \pi_2)^2}$.

To show the uniqueness independent of the choice of $N_0$, let $\tilde{N}_0$ be another open dense subset with measure-zero complement for which $F:\tilde{N}_0\times M\rightarrow \Bbb R$ is a fiberwise Morse function. By Baire's category theorem, $N_0\cap\tilde{N}_0$ is also an open dense subset with measure-zero complement. Therefore $F_{\bar{*}}:(N_0\cap\tilde{N}_0)\times S^m_V\rightarrow \Bbb R$ must extend to the above $F_{\bar{*}}:N\times S^m_V\rightarrow \Bbb R$.

The fiberwise radial symmetry of $F_{\bar{*}}$ is obvious, because it already holds on a dense subset $N_0\times S^m_V$.

Now we prove (\ref{Ocean-nuclear}).
For any $s\in N_0\cdot \mathfrak{g}$, as a function on $S^m_V$
$$(F\cdot\mathfrak{g})_{\bar{*}s}=((F\cdot\mathfrak{g})_{s})_*=(F_{s\cdot \mathfrak{g}^{-1}}\cdot \mathfrak{g})_*=(F_{s\cdot \mathfrak{g}^{-1}})_*=(F_{\bar{*}}\cdot \mathfrak{g})_{s}$$ where the 3rd equality is due to the fact that the $G$-action on $(M,g)$ is isometric.
Thus for any $\mathfrak{g}\in G$, (\ref{Ocean-nuclear}) holds on $(N_0\cdot \mathfrak{g})\times S^m_V$ and hence on $N\times S^m_V$ by the denseness of $N_0$.
The case of a left action is proved in the same way. For any $\mathfrak{g}\in G$ and  $s\in \mathfrak{g}\cdot N_0$, as a function on $S^m_V$
$$(\mathfrak{g}\cdot F)_{\bar{*}s}=((\mathfrak{g}\cdot F)_{s})_*=(\mathfrak{g}\cdot F_{\mathfrak{g}^{-1}\cdot s})_*=(F_{\mathfrak{g}^{-1}\cdot s})_*=(\mathfrak{g}\cdot F_{\bar{*}})_{s},$$ so
\begin{eqnarray*}
(\mathfrak{g}\cdot F)_{\bar{*}}=\mathfrak{g}\cdot F_{\bar{*}}
\end{eqnarray*}
holds on $(\mathfrak{g}\cdot N_0)\times S^m_V$ and hence $N\times S^m_V$.
\end{proof}


\subsection{Derivative estimates of $F_{\bar{*}}$}

We know that $F_{\bar{*}}$ is smooth a.e. namely in the open subset $N_0\times S^m_V-\mathcal{C}^*$. Now let's estimate its strong, i.e ordinary 1st order derivatives defined there. We shall use it to show that $F_{\bar{*}}$ is weakly differentiable on $N\times S^m_V$.
The derivative along $S^m_V$ direction was already estimated in Subsection 4.1 and here we proceed to the integral estimate of the derivative of $F_{\bar{*}}$ along $N_0$ direction.

For a local coordinate $(x_1,\cdots,x_n)$ of $N_0$, to compute $\frac{\partial F_{\bar{*}}}{\partial x_j}$, we may assume that $N_0$ is an interval $(s_1,s_2)$ in the $j$-th coordinate line for convenience.
Let $s_0\in N_0$. For any two regular values $a_1<a_2$ of $F_{s_0}$, let $r_i$ for $i=1,2$ be the real number satisfying $$F_{\bar{*}}(s_0,r_i)=a_i,$$
and define $\alpha_i:(s_1,s_2)\rightarrow \Bbb R$ by $$\alpha_i(s):=F_{\bar{*}}(s,r_i),$$
and
$$\chi_{s}:M\rightarrow [0,1]\ \ \ \ \textrm{and}\ \ \ \ \chi^*:S^m_V\rightarrow [0,1]$$ by
the characteristic functions of $$\{x\in M|\alpha_1(s)< F(s,x)< \alpha_2(s)\}\ \ \  \textrm{and}\  \ \
U(r_1,r_2)$$ respectively.
Note that $\chi_s$ and $\chi^*$ are defined so that for any $s$
\begin{eqnarray}\label{gguggu-0}
\int_{M} \chi_{s}\ d\mu_g=\int_{S^m_V} \chi^*\ d\mu_{\textsl{g}_V},
\end{eqnarray}
and
\begin{eqnarray}\label{gguggu}
\int_{M} F_s\chi_{s}\ d\mu_g=\int_{S^m_V} F_{\bar{*}s}\chi^*\ d\mu_{\textsl{g}_V}.
\end{eqnarray}
Lastly let us fix some notations :
$$D_\tau:= \cup_{i=1}^2\{x\in M|\ |F(s_0,x)-a_i|\leq \tau\},$$
$$\Sigma_{s,t}:=\{x\in M|F(s,x)=t\}\ \ \ \ \textrm{and}\ \ \ \  \Sigma^*_{s,t}:=\{y\in S^m_V|F_{\bar{*}}(s,y)=t\}$$ for any $\tau\geq 0$, $t\in \Bbb R$ and $s\in N_0$.

The fundamental lemma of this subsection is :
\begin{Lemma}
\begin{eqnarray}\label{officetel-1}
\frac{\partial }{\partial s}|_{s=s_0}\int_{M} F(s,x)\chi_{s}(x)\ d\mu_g(x) &=& \int_M \frac{\partial F}{\partial s}(s_0,x)\chi_{s_0}(x)\ d\mu_g(x).
\end{eqnarray}
If $[a_1,a_2]$ does not contain a critical value of $F_{s_0}$, then
\begin{eqnarray}\label{officetel-2}
\frac{\partial }{\partial s}|_{s=s_0}\int_{S^m_V} F_{\bar{*}}(s,y)\chi^*(y)\ d\mu_{\textsl{g}_V}(y)
&=&\int_{S^m_V} \frac{\partial F_{\bar{*}}}{\partial s}(s_0,y)\chi^*(y)\ d\mu_{\textsl{g}_V}(y).
\end{eqnarray}
\end{Lemma}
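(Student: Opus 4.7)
The plan is to treat (\ref{officetel-1}) as a Reynolds transport theorem computation whose boundary contribution cancels, and to treat (\ref{officetel-2}) as direct differentiation under the integral sign once smoothness of $F_{\bar{*}}$ has been established on a suitable product neighborhood.

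For (\ref{officetel-1}), I would decompose $\chi_s = \chi_{\Omega_2(s)} - \chi_{\Omega_1(s)}$ where $\Omega_i(s) := \{x\in M : F(s,x)<\alpha_i(s)\}$, and exploit the defining identity (\ref{BJY}) to note that $\mu_g(\Omega_i(s)) = \mathcal{V}(s,\alpha_i(s)) = A(r_i)$ is \textbf{constant in $s$}. Since $a_i$ is a regular value of $F_{s_0}$, Lemma \ref{hyunkeun} gives $\partial_t\mathcal{V}(s_0,a_i)>0$, so by the implicit function theorem $\alpha_i(s)$ is smooth in $s$ near $s_0$; moreover $\alpha_i(s)$ remains a regular value of $F_s$ on a neighborhood of $s_0$ because $\bar{F}(\mathcal{C})$ is relatively closed. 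Thus the moving domains $\Omega_i(s)$ form a $C^1$-family with smoothly varying boundary, and the transport theorem produces
\[
\frac{d}{ds}\int_{\Omega_i(s)} F\, d\mu_g \;=\; \int_{\Omega_i(s)} \frac{\partial F}{\partial s}\, d\mu_g \;+\; \int_{\partial\Omega_i(s)} F\, v_n\, d\sigma,
\]
where $v_n$ denotes the outward normal velocity of $\partial\Omega_i(s)$. Because $F\equiv \alpha_i(s)$ on $\partial\Omega_i(s)$ and $\int_{\partial\Omega_i(s)} v_n\, d\sigma = \frac{d}{ds}\mu_g(\Omega_i(s)) = 0$, the boundary integral collapses, and subtracting the $i=1$ identity from the $i=2$ one at $s=s_0$ yields (\ref{officetel-1}).

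For (\ref{officetel-2}) the integration domain $U(r_1,r_2)$ is $s$-independent, so no boundary term arises and only smoothness of the integrand is required. The hypothesis that $[a_1,a_2]$ contains no critical value of $F_{s_0}$ means $F_{\bar{*}}(s_0,r) \in [a_1,a_2]$ is a regular value of $F_{s_0}$ for every $r\in[r_1,r_2]$, so Proposition \ref{mysister} gives a neighborhood of each $(s_0,r)$ on which $F_{\bar{*}}$ is smooth. Compactness of $[r_1,r_2]$ upgrades this to a uniform product neighborhood $V\times [r_1-\varepsilon,r_2+\varepsilon]$ of $\{s_0\}\times[r_1,r_2]$ on which $F_{\bar{*}}$ and $\partial_s F_{\bar{*}}$ are smooth and bounded, and standard Leibniz differentiation under the integral produces (\ref{officetel-2}).

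The main obstacle is the careful justification in (\ref{officetel-1}): one must verify that $\alpha_i(s)$ is smooth in $s$ \emph{and} remains a regular value of $F_s$ throughout a neighborhood of $s_0$, so that $\partial\Omega_i(s)$ really is a smoothly varying embedded hypersurface and the transport theorem applies. Once this rigidity is in place, the vanishing of the boundary contribution is automatic, being driven by the two independent facts that $F$ is constant on each $\partial\Omega_i(s)$ and that the enclosed volume $\mu_g(\Omega_i(s))$ is $s$-invariant by the very construction of $F_{\bar{*}}$.
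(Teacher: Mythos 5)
Your proof of (\ref{officetel-2}) is essentially the paper's own: both establish that $F_{\bar{*}}$ is smooth with bounded $\partial_s$-derivative on a product neighborhood $[s_0-\delta,s_0+\delta]\times\overline{U(r_1,r_2)}$ (the paper builds the neighborhood from (\ref{vietnam}) and the regularity hypothesis; you invoke Proposition \ref{mysister} plus compactness) and then apply Leibniz differentiation under the integral with the fixed domain $U(r_1,r_2)$.

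Your proof of (\ref{officetel-1}) is correct but takes a genuinely different route. The paper argues by hand: it writes the difference quotient as a sum of three contributions, dispatches the main one by Lebesgue dominated convergence (Sublemma \ref{dr-carter-1}), kills the piece proportional to $a_i$ by the identity $\mathcal{V}(s,\alpha_i(s))=A(r_i)$ (Sublemma \ref{dr-carter-1.5}), and bounds the piece weighted by $F(s_0,x)-a_i$ using a tubular-neighborhood volume estimate near $\Sigma_{s_0,a_i}$ (Sublemma \ref{dr-carter-2}). You instead package the whole thing as a Reynolds transport identity for the two sublevel sets $\Omega_i(s)$, observing that the boundary term is $\alpha_i(s)\cdot\frac{d}{ds}\mu_g(\Omega_i(s))=0$ because $F\equiv\alpha_i(s)$ on $\partial\Omega_i(s)$ and the enclosed volume is frozen by (\ref{BJY}). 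Both proofs turn on exactly the same fact --- $\mathcal{V}(s,\alpha_i(s))$ is constant in $s$ --- so the mechanism of cancellation is identical; the difference is whether one reproves the transport formula from scratch (as the paper effectively does, with the dominated-convergence and tubular-neighborhood estimates playing the role of the interior and boundary terms respectively) or cites it. Your version is shorter and more transparent conceptually, at the cost of outsourcing to the transport theorem the verification that $\{\Omega_i(s)\}$ is a genuinely smooth isotopy of domains; you do flag the prerequisites (smoothness of $\alpha_i(s)$ and persistence of regularity, secured by the openness of the set of regular values of $\bar F$ on the compact manifold $N\times M$), which is exactly what is needed to make the transport theorem legitimate here. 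The paper's approach is more self-contained and also handles the single-point identity (\ref{Chokuk3}) as a byproduct of the same estimates, which you would need a slightly separate argument for.
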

\begin{proof}
Since $(s_0,a_1),(s_0,a_2)\in N\times \Bbb R$ are regular values of $\bar{F}$, we can choose a positive constant $\varepsilon<\frac{a_2-a_1}{2}$ such that $[s_0-\varepsilon,s_0+\varepsilon]\times [a_i-\varepsilon,a_i+\varepsilon]$ for $i=1,2$ does not contain a critical value of $\bar{F}$.
By the continuity of $\alpha_i$ we can choose  $\delta\in (0,\varepsilon)$ such that
\begin{eqnarray}\label{vietnam}
\alpha_i([s_0-\delta,s_0+\delta])\subset (a_i-\varepsilon,a_i+\varepsilon)
\end{eqnarray}
for $i=1,2$, so each $\alpha_i$ is differentiable at any point of $[s_0-\delta,s_0+\delta]$.

Denote $M_1:=\{x\in M|F(s_0,x)<\frac{a_1+a_2}{2}\}$ and $M_2:=M-M_1$, and we compute $\frac{\partial }{\partial s}|_{s=s_0}\int_{M} F_s\chi_{s}\ d\mu_g$ as
\begin{eqnarray*}
 & & \lim_{\Delta s\rightarrow 0}\frac{\int_M(F(s_0+\Delta s,x)\chi_{s_0+\Delta s}(x)-F(s_0,x)\chi_{s_0}(x))\ d\mu_g(x)}{\Delta s}\\
 &=& \lim_{\Delta s\rightarrow 0}\frac{\int_{M}[(F(s_0+\Delta s,x)-F(s_0,x))\chi_{s_0+\Delta s}+F(s_0,x)(\chi_{s_0+\Delta s}-\chi_{s_0})]
\ d\mu_g}{\Delta s}\\
&=& \lim_{\Delta s\rightarrow 0}\frac{\int_M(F(s_0+\Delta s,x)-F(s_0,x))\chi_{s_0+\Delta s}(x)\ d\mu_g(x)}{\Delta s}+\\ & & \lim_{\Delta s\rightarrow 0}\sum_{i=1}^2(\int_{M_i} a_i\frac{\chi_{s_0+\Delta s}-\chi_{s_0}}{\Delta s}\ d\mu_g+\int_{M_i} (F(s_0,x)-a_i)\frac{\chi_{s_0+\Delta s}-\chi_{s_0}}{\Delta s}\ d\mu_g)\\
&=&  \int_M \frac{\partial F}{\partial s}(s_0,x)\chi_{s_0}(x)\ d\mu_g(x),
\end{eqnarray*}
where the 2nd and 3rd equalities are due to the following 3 sublemmas.

\begin{Sublemma}\label{dr-carter-1}
$$\lim_{\Delta s\rightarrow 0}\frac{\int_M(F(s_0+\Delta s,x)-F(s_0,x))\chi_{s_0+\Delta s}(x)\ d\mu_g(x)}{\Delta s}=\int_M \frac{\partial F}{\partial s}(s_0,x)\chi_{s_0}(x)\ d\mu_g(x).$$
\end{Sublemma}
\begin{proof}
It is enough to show that for any sequence $\sigma_n\in (-\delta,0)\cup (0,\delta)$ converging to 0,
$$\lim_{n\rightarrow \infty}\frac{\int_M(F(s_0+\sigma_n,x)-F(s_0,x))\chi_{s_0+\sigma_n}(x)\ d\mu_g(x)}{\sigma_n}= \int_M \frac{\partial F}{\partial s}(s_0,x)\chi_{s_0}(x)\ d\mu_g(x).$$


Since $F(s,x)-\alpha_i(s)$ for any $x\in M$ and $i=1,2$ is a continuous function of $s$, $$\lim_{\Delta s\rightarrow 0}\chi_{s_0+\Delta s}(x)=\chi_{s_0}(x)$$ for any $x\in M-F^{-1}_{s_0}\{\alpha_1(s_0),\alpha_2(s_0)\}$ and hence almost every $x$ in $M$, because a smooth compact hypersurface $F^{-1}_{s_0}\{\alpha_1(s_0),\alpha_2(s_0)\}$ is a measure-zero subset of $M$.

By the mean value theorem, for any $x\in M$, $n\in \Bbb N$ there exists $\theta_n(x)\in (0,1)$ such that
\begin{eqnarray*}
\left|\frac{F(s_0+\sigma_n,x)-F(s_0,x)}{\sigma_n}\right| &=& \left|\frac{\partial F}{\partial s}(s_0+\theta_n(x)\sigma_n,x)\right| \\ &\leq&
\sup_{[s_0-\delta,s_0+\delta]\times M}\left|\frac{\partial F}{\partial s}\right| \\ &<& \infty.
\end{eqnarray*}
Thus one can apply Lebesgue's dominated convergence theorem to
$$
\lim_{n\rightarrow \infty}\frac{\int_M(F(s_0+\sigma_n,x)-F(s_0,x))\chi_{s_0+\sigma_n}(x)\ d\mu_g(x)}{\sigma_n}
$$
to obtain the desired equality.

\end{proof}

\begin{Sublemma}\label{dr-carter-1.5}
$$\lim_{\Delta s\rightarrow 0}\int_{M_i} \frac{\chi_{s_0+\Delta s}-\chi_{s_0}}{\Delta s}\ d\mu_g=0$$  for $i=1,2$.
\end{Sublemma}
\begin{proof}
This is also easily proved in the light of (\ref{gguggu-0}).
Note that $\bigcup_{s\in (s_0-\delta, s_0+\delta)}\Sigma_{s,\alpha_i(s)}$ for $i=1,2$ is a smooth hypersurface of $(s_0-\delta, s_0+\delta)\times M$. Since $\Sigma_{s_0,\alpha_1(s_0)}\subseteq M_1$ and $\Sigma_{s_0,\alpha_2(s_0)}\subseteq \textrm{int}(M_2)$ are compact hypersurfaces of $M$, there exists $\delta'\in (0,\delta)$ such that $\Sigma_{s,\alpha_1(s)}\subseteq M_1$ and $\Sigma_{s,\alpha_2(s)}\subseteq \textrm{int}(M_2)$ for any $s\in [s_0-\delta', s_0+\delta']$.

Therefore
\begin{eqnarray*}
\lim_{\Delta s\rightarrow 0}\int_{M_1} \frac{\chi_{s_0+\Delta s}-\chi_{s_0}}{\Delta s}\ d\mu_g &=& \frac{\partial}{\partial s}|_{s=s_0}\mu(\{x\in M_1|\alpha_1(s)<F(s,x)<\alpha_2(s)\})\\ &=&
\frac{\partial}{\partial s}|_{s=s_0}\mu(M_1-\{x\in M|F(s,x)\leq \alpha_1(s)\})\\ &=&
\frac{\partial}{\partial s}|_{s=s_0}\left(\mu(M_1)-\mathcal{V}(s,\alpha_1(s))\right)\\ &=&
-\frac{\partial}{\partial s}|_{s=s_0}\mathcal{V}(s,\alpha_1(s))\\ &=& -\frac{\partial}{\partial s}|_{s=s_0}A(r_1)\\ &=& 0
\end{eqnarray*}
where the 5th equality is due to (\ref{BJY}).

The other part is obtained likewise as
\begin{eqnarray*}
\lim_{\Delta s\rightarrow 0}\int_{M_2} \frac{\chi_{s_0+\Delta s}-\chi_{s_0}}{\Delta s}\ d\mu_g &=&
\frac{\partial}{\partial s}|_{s=s_0}\mu(\{x\in M_2|\alpha_1(s)<F(s,x)<\alpha_2(s)\})\\ &=&
\frac{\partial}{\partial s}|_{s=s_0}\mu(\{x\in M|F(s,x)< \alpha_2(s)\}-M_1)\\ &=&
\frac{\partial}{\partial s}|_{s=s_0}(\mathcal{V}(s,\alpha_2(s))-\mu(M_1))\\ &=&
\frac{\partial}{\partial s}|_{s=s_0}\mathcal{V}(s,\alpha_2(s))\\ &=& \frac{\partial}{\partial s}|_{s=s_0}A(r_2)\\ &=& 0.
\end{eqnarray*}
\end{proof}

\begin{Sublemma}\label{dr-carter-2}
$$\lim_{\Delta s\rightarrow 0}\int_{M_i} (F(s_0,x)-a_i)\frac{\chi_{s_0+\Delta s}(x)-\chi_{s_0}(x)}{\Delta s}\ d\mu_g(x)=0$$ for $i=1,2$.
\end{Sublemma}
\begin{proof}
Set a constant $$\frak{m}:=\sup \{|\frac{\partial F}{\partial s}(s,x)|+|\alpha_i'(s)|\ |\ x\in M, s\in [s_0-\delta,s_0+\delta],  i=1,2\}.$$
By the mean value theorem,
$$F(s_0+\Delta s,x)=F(s_0,x)+\frac{\partial F}{\partial s}(s_0+\vartheta \Delta s,x)\Delta s$$ for some $\vartheta\in (0,1)$ depending on $x$ and $\Delta s$.
Thus for any $x\in M$ and $\Delta s\in (-\delta,\delta)$, $$|F(s_0+\Delta s,x)-F(s_0,x)|\leq  \frak{m}|\Delta s|$$ and likewise
$$|\alpha_i(s_0+\Delta s)-\alpha_i(s_0)|\leq \frak{m}|\Delta s|,$$ implying that
$$F(s_0,x)-\alpha_i(s_0)- 2\frak{m}|\Delta s|\leq F(s_0+\Delta s,x)-\alpha_i(s_0+\Delta s)\leq F(s_0,x)-\alpha_i(s_0)+ 2\frak{m}|\Delta s|.$$
If $x\in M-D_{2\frak{m}|\Delta s|}$, then $$|F(s_0,x)-\alpha_i(s_0)|> 2\frak{m}|\Delta s|$$ for $i=1,2$, and hence
the sign of $F(s_0+\Delta s,x)-\alpha_i(s_0+\Delta s)$ for $\Delta s\in (-\delta,\delta)$ is the same as that of $F(s_0,x)-\alpha_i(s_0)$, implying  $$\chi_{s_0+\Delta s}(x)=\chi_{s_0}(x).$$
Therefore for $\Delta s\in (-\delta,\delta)$ the support of $\chi_{s_0+\Delta s}-\chi_{s_0}$ is located in $D_{2\frak{m}|\Delta s|}$, and  for $|\Delta s|< \frac{1}{2\frak{m}}\frac{a_2-a_1}{2}$
$$M_i\cap D_{2\frak{m}|\Delta s|}=\{x\in M|\ |F(s_0,x)-a_i|\leq 2\frak{m}|\Delta s|\},$$
which enables us to estimate
\begin{eqnarray*}
\left|\int_{M_i} (F(s_0,x)-a_i)\frac{\chi_{s_0+\Delta s}-\chi_{s_0}}{\Delta s}\ d\mu_g\right| &\leq& \int_{M_i} |F(s_0,x)-a_i| \frac{|\chi_{s_0+\Delta s}-\chi_{s_0}|}{|\Delta s|}\ d\mu_g\\
&\leq& 2\frak{m}|\Delta s|\frac{1}{|\Delta s|}\mu(D_{2\frak{m}|\Delta s|})\\
&=& 2\frak{m}\ \mu(D_{2\frak{m}|\Delta s|})\\
&\rightarrow& 0
\end{eqnarray*}
as $\Delta s\rightarrow 0$ by using the fact that $D_\tau$ for any sufficiently small $\tau>0$ consists of thin tubular neighborhoods of smooth compact hypersurfaces $\Sigma_{s_0,a_1}$ and $\Sigma_{s_0,a_2}$ in $M$.
\end{proof}

The computation of $\frac{\partial }{\partial s}|_{s=s_0}\int_{S^m_V} F_{\bar{*}}\chi^*\ d\mu_{\textsl{g}_V}$ is easier, since $\chi^*$ does not depend on $s$. At this time by the additional condition that there are no critical values of $F_{s_0}$ in $[a_1,a_2]$, we choose $\varepsilon\in (0,\frac{a_2-a_1}{2})$ such that $[s_0-\varepsilon,s_0+\varepsilon]\times [a_1-\varepsilon,a_2+\varepsilon]$ does not contain a critical value of $\bar{F}$. For $\delta\in (0,\varepsilon)$ satisfying (\ref{vietnam}), i.e. $$F_{\bar{*}}([s_0-\delta,s_0+\delta]\times \overline{U(r_1,r_2)})\subset (a_1-\varepsilon,a_2+\varepsilon),$$ $F_{\bar{*}}$ is smooth on an open neighborhood of $[s_0-\delta,s_0+\delta]\times \overline{U(r_1,r_2)}$.
In particular $$\sup_{[s_0-\delta,s_0+\delta]\times \overline{U(r_1,r_2)}}\left|\frac{\partial F_{\bar{*}}}{\partial s}\right|<\infty,$$
so we can change the order of differentiation and integration in
\begin{eqnarray*}
\frac{\partial }{\partial s}|_{s=s_0}\int_{S^m_V} F_{\bar{*}}(s,y)\chi^*(y)\ d\mu_{\textsl{g}_V}(y)&=&
\frac{\partial }{\partial s}|_{s=s_0}\int_{U(r_1,r_2)} F_{\bar{*}}(s,y)\ d\mu_{\textsl{g}_V}(y)\\ &=&
\int_{U(r_1,r_2)} \frac{\partial F_{\bar{*}}}{\partial s}(s_0,y)\ d\mu_{\textsl{g}_V}(y)\\ &=&\int_{S^m_V} \frac{\partial F_{\bar{*}}}{\partial s}(s_0,y)\chi^*(y)\ d\mu_{\textsl{g}_V}(y).
\end{eqnarray*}

\end{proof}

\begin{Proposition}\label{blessed}
For $s_0\in N_0$ and a regular value $a_0=F_{\bar{*}}(s_0,r_0)$ of $F_{s_0}$
\begin{eqnarray}\label{Chokuk3}
\left|\frac{\partial F_{\bar{*}}}{\partial s}(s_0,r_0)\right|\leq \max_{\{x\in M|F(s_0,x)=a_0\}}\left|\frac{\partial F}{\partial s}(s_0,x)\right|,
\end{eqnarray}
and $\frac{\partial F_{\bar{*}}}{\partial s}|_{s=s_0}$ defined a.e. on $S^m_V$ is equivalent to a function in $L^p(S^m_V)$ for any $p\geq 1$ such that
\begin{eqnarray}\label{notaebok}
\int_{\{x\in M|a_1<F(s_0,x)<a_2\}} \frac{\partial F}{\partial s}|_{s=s_0} d\mu_g=\int_{\{y\in S^m_V|a_1<F_{\bar{*}}(s_0,y)<a_2\}} \frac{\partial F_{\bar{*}}}{\partial s}|_{s=s_0} d\mu_{\textsl{g}_V}
\end{eqnarray}
for any $[a_1,a_2]\subset \Bbb R$.

\end{Proposition}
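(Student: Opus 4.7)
The plan is to prove (\ref{notaebok}) first under the extra assumption that $[a_1,a_2]$ contains no critical value of $F_{s_0}$, then deduce the pointwise bound (\ref{Chokuk3}) from it, then deduce the $L^p$-statement, and finally extend (\ref{notaebok}) to arbitrary intervals.

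First I would apply the identity (\ref{gguggu}) at every $s$ in a small neighborhood of $s_0$ and differentiate at $s_0$. By (\ref{officetel-1}) the derivative of the left-hand side equals $\int_M \frac{\partial F}{\partial s}(s_0,\cdot)\,\chi_{s_0}\,d\mu_g$, and by (\ref{officetel-2})---available precisely when $[a_1,a_2]$ misses all critical values of $F_{s_0}$---the derivative of the right-hand side equals $\int_{S^m_V} \frac{\partial F_{\bar{*}}}{\partial s}(s_0,\cdot)\,\chi^*\,d\mu_{\textsl{g}_V}$. Since $\chi_{s_0}$ and $\chi^*$ are the characteristic functions of $\{a_1<F_{s_0}<a_2\}$ and $\{a_1<F_{\bar{*}s_0}<a_2\}$ respectively, this is exactly (\ref{notaebok}) in the restricted case.

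Next, for a regular value $a_0 = F_{\bar{*}}(s_0,r_0)$ I would apply this restricted identity to $(a_0-\epsilon, a_0+\epsilon)$ for $\epsilon>0$ small enough that the interval misses all critical values of $F_{s_0}$. The coarea formula rewrites both sides as $\int_{a_0-\epsilon}^{a_0+\epsilon} I_M(t)\,dt$ and $\int_{a_0-\epsilon}^{a_0+\epsilon} I_S(t)\,dt$, where $I_M(t):=\int_{F_{s_0}^{-1}(t)} \frac{\partial F/\partial s}{|\nabla^M F_{s_0}|}\,d\sigma_t$ and $I_S(t)$ is defined analogously on $S^m_V$. Dividing by $2\epsilon$ and using continuity of $I_M$ and $I_S$ in $t$ near $a_0$ (which follows because nearby level sets are smoothly isotopic through the gradient flow of $F_{s_0}$), I obtain $I_M(a_0) = I_S(a_0)$. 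By radial symmetry of $F_{\bar{*}s_0}$, the quantities $\frac{\partial F_{\bar{*}}}{\partial s}(s_0,\cdot)$ and $|\nabla F_{\bar{*}s_0}|$ are constant along $F_{\bar{*}s_0}^{-1}(a_0)$, and (\ref{WTH}) lets me cancel the two integrals of $|\nabla|^{-1}$. The result is the weighted-average formula
\[
\frac{\partial F_{\bar{*}}}{\partial s}(s_0,r_0)=\frac{\int_{F_{s_0}^{-1}(a_0)} \bigl(\partial F/\partial s\bigr)\,|\nabla^M F_{s_0}|^{-1}\,d\sigma_{a_0}}{\int_{F_{s_0}^{-1}(a_0)} |\nabla^M F_{s_0}|^{-1}\,d\sigma_{a_0}},
\]
from which (\ref{Chokuk3}) is immediate. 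The $L^p$ claim then follows because $\partial F/\partial s$ is bounded on the compact $N\times M$, so the same constant bounds $\frac{\partial F_{\bar{*}}}{\partial s}(s_0,\cdot)$ a.e. on $S^m_V$; finite volume of $S^m_V$ gives $L^\infty \hookrightarrow L^p$ for every $p\geq 1$.

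Finally, (\ref{notaebok}) for a general interval $(a_1,a_2)$ follows from the restricted version by decomposing $(a_1,a_2)$ at the (finitely many, by compactness) critical values of $F_{s_0}$ lying inside, applying the restricted identity to each resulting open sub-interval, and adding; the individual critical level sets and their images in $S^m_V$ have measure zero and contribute nothing. The main technical obstacle in the plan is the continuity of $I_M$ (and $I_S$) at the regular value $a_0$ needed to pass from the integrated identity to the pointwise one: justifying it requires the standard but careful observation that on a neighborhood of $a_0$ consisting of regular values the gradient flow of $F_{s_0}$ provides a smooth diffeomorphism between nearby level sets along which both $|\nabla^M F_{s_0}|$ and the induced area form vary smoothly.
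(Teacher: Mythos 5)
Your proposal is correct and follows the same overall architecture as the paper's proof: first (\ref{notaebok}) for intervals free of critical values, obtained by differentiating (\ref{gguggu}) via (\ref{officetel-1}) and (\ref{officetel-2}); then (\ref{Chokuk3}); then the $L^p$ statement from boundedness; then the general case of (\ref{notaebok}) by excising the finitely many critical values, whose level sets have measure zero on both sides. The one place you diverge is the derivation of (\ref{Chokuk3}): the paper divides the restricted identity over the thin shell $\{|F(s_0,\cdot)-a_0|<\epsilon\}$ by its volume (equal on both sides by the definition of the rearrangement) and bounds the resulting average directly by the maximum, whereas you push the limit through the coarea formula to obtain the exact weighted-average representation of $\frac{\partial F_{\bar *}}{\partial s}(s_0,r_0)$ over the level set $F_{s_0}^{-1}(a_0)$ with weight $|\nabla^M F_{s_0}|^{-1}$. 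Your route is slightly stronger: the identity you establish is precisely (\ref{sonshine}), which the paper only derives later inside the proof of Proposition \ref{yun}, so you are front-loading that computation; the cost is that you must justify continuity of $I_M(t)$ and $I_S(t)$ near the regular value $a_0$, which you correctly flag and which does follow from the gradient-flow isotopy of nearby regular level sets on the compact fiber. Both arguments are sound, and your general-interval step (sub-intervals with critical endpoints handled by an inner limit plus boundedness of the integrands) matches the paper's limit argument.
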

\begin{proof}
Let $\{t_1,\cdots,t_l\}$ for $t_1<\cdots<t_l$ be the set of critical values of $F_{s_0}$.
First we prove (\ref{notaebok}) when $[a_1,a_2]$ is contained in some $(t_j,t_{j+1})$.
Applying (\ref{gguggu}) to (\ref{officetel-1}) and (\ref{officetel-2}),
we reach the desired conclusion that
$$\int_M \frac{\partial F}{\partial s}(s_0,x)\chi_{s_0}(x)\ d\mu_g=\int_{S^m_V} \frac{\partial F_{\bar{*}}}{\partial s}(s_0,y)\chi^*(y)\ d\mu_{\textsl{g}_V}.$$

Postponing the proof for the general case, we prove (\ref{Chokuk3}) by using the above-proven special case. Since $F_{\bar{*}}$ is smooth in a neighborhood of $\{s_0\}\times \{y\in S^m_V|r(y)=r_0\}$,
\begin{eqnarray*}
\left|\frac{\partial F_{\bar{*}}}{\partial s}(s_0,r_0)\right|&=&\lim_{\epsilon\rightarrow 0}\frac{|\int_{\{y\in S^m_V|\ |F_{\bar{*}}(s_0,y)-a_0|<\epsilon\}} \frac{\partial F_{\bar{*}}}{\partial s}|_{s=s_0}\ d\mu_{\textsl{g}_V}|}{\mu(\{y\in S^m_V|\ |F_{\bar{*}}(s_0,y)-a_0|<\epsilon\})}
\\ &=& \lim_{\epsilon\rightarrow 0}\frac{|\int_{\{x\in M|\ |F(s_0,x)-a_0|<\epsilon\}} \frac{\partial F}{\partial s}|_{s=s_0}\ d\mu_{g}|}{\mu(\{x\in M|\ |F(s_0,x)-a_0|<\epsilon\})}\\
&\leq& \max_{\{x\in M|F(s_0,x)=a_0\}}\left|\frac{\partial F}{\partial s}(s_0,x)\right|.
\end{eqnarray*}

Now by (\ref{Chokuk3}), $\frac{\partial F_{\bar{*}}}{\partial s}|_{s=s_0}$ is a bounded continuous function defined almost everywhere of $S^m_V$, i.e. away from $\bigcup_{i=1}^l(F_{\bar{*}s_0})^{-1}(t_i)$. Therefore it belongs to $L^p(S^m_V)$ for any $p\geq 1$ and (\ref{notaebok}) for general $[a_1,a_2]$ is obtained by using the above-proven special case of (\ref{notaebok}) combined with that $F_{s_0}^{-1}(t_i)$ and $(F_{\bar{*}s_0})^{-1}(t_i)$ for all $t_i$ are measure zero subsets of $M$ and $S^m_V$ respectively.
For example if $t_i=a_1<t_{i+1}<a_2<t_{i+2}$, then
\begin{eqnarray*}
\int_{\{x\in M|a_1<F(s_0,x)<a_2\}} \frac{\partial F}{\partial s}|_{s=s_0} d\mu_g&=&
\lim_{\epsilon\rightarrow 0}\int_{\{x\in M|F(s_0,x)\in I_\epsilon\}} \frac{\partial F}{\partial s}|_{s=s_0} d\mu_g\\
&=&
\lim_{\epsilon\rightarrow 0}\int_{\{y\in S^m_V|F_{\bar{*}}(s_0,y)\in I_\epsilon\}} \frac{\partial F_{\bar{*}}}{\partial s}|_{s=s_0} d\mu_{\textsl{g}_V}\\
&=&
\int_{\{y\in S^m_V|a_1<F_{\bar{*}}(s_0,y)<a_2\}} \frac{\partial F_{\bar{*}}}{\partial s}|_{s=s_0} d\mu_{\textsl{g}_V}
\end{eqnarray*}
where $I_\epsilon$ denotes $(a_1+\epsilon,t_{i+1}-\epsilon)\cup (t_{i+1}+\epsilon,a_2)$.

\end{proof}

\begin{Remark}\label{Tiatus}
By using (\ref{notaebok}) and (\ref{officetel-1}), we can generalize the equality (\ref{officetel-2}) to any $[a_1,a_2]\subset \Bbb R$.
Indeed for any $[a_1,a_2]$
\begin{eqnarray*}
\frac{\partial }{\partial s}|_{s=s_0}\int_{S^m_V} F_{\bar{*}}(s,y)\chi^*(y)\ d\mu_{\textsl{g}_V}(y)
&=& \frac{\partial }{\partial s}|_{s=s_0}\int_{M} F(s,x)\chi_{s}(x)\ d\mu_g(x)\\
&=& \int_M \frac{\partial F}{\partial s}(s_0,x)\chi_{s_0}(x)\ d\mu_g(x)\\
&=& \int_{S^m_V} \frac{\partial F_{\bar{*}}}{\partial s}(s_0,y)\chi^*(y)\ d\mu_{\textsl{g}_V}(y).
\end{eqnarray*}
\end{Remark}

Now we can state the conclusion of this subsection. As stated in Introduction, we shall split the exterior derivative $d$ on $N\times M$ and $d$ on $N\times S^m_V$ as $d^N+d^M$ and $d^N+d^S$ respectively. The inequalities in Theorem \ref{forgive-me} immediately follow from the first two inequalities in :
\begin{Proposition}\label{yun}
For any $s\in N_0$, $[a_1,a_2]\subset \Bbb R$, and $p\geq 1$
$$\int_{\{y\in S^m_V|a_1<F_{\bar{*}}(s,y)<a_2\}} \left|d^N F_{\bar{*}}\right|^p d\mu_{\textsl{g}_V} \leq
\int_{\{x\in M|a_1<F(s,x)<a_2\}} \left|d^N F\right|^p d\mu_g$$
$$\int_{\{y\in S^m_V|a_1<F_{\bar{*}}(s,y)<a_2\}} \left|d^S F_{\bar{*}}\right|^p d\mu_{\textsl{g}_V} \leq (C_1)^p
\int_{\{x\in M|a_1<F(s,x)<a_2\}} \left|d^M F\right|^p d\mu_g$$
$$\sup_{N_0\times S^m_V-\mathcal{C}^*}|dF_{\bar{*}}|\leq \max(1,C_1)\max_{N\times M}|dF|$$
where $C_1$ is the constant in Proposition \ref{sejong-brt}, and if $Ric_g \geq k > 0$, then $\max(1,C_1)$ is given by $C_1=\left(\frac{V_m}{V}\right)^{\frac{1}{m}}\sqrt{\frac{m-1}{k}}$.

The extended function $F_{\bar{*}}:N\times S^m_V\rightarrow \Bbb R$ is weakly differentiable belonging to $L_1^p(N\times S^m_V)$ for any $p\geq 1$.
\end{Proposition}
\begin{proof}
For the 1st inequality
we need a manipulation on (\ref{notaebok}). As before, the fiberwise gradients on each $\{s\}\times M$ and $\{s\}\times S^m_V$ are denoted by $\nabla^M$ and $\nabla^S$ respectively, so $$|\nabla^MF|=|d^MF| \ \ \ \textrm{and}\ \ \ |\nabla^SF_{\bar{*}}|=|d^SF_{\bar{*}}|.$$
By Proposition \ref{blessed}, $\frac{\partial F_{\bar{*}}}{\partial s}|_{\{s\}\times S^m_V}$ for $s\in N_0$ belongs to $L^1(S^m_V)$ and is smooth around $\Sigma_{s,a}^*={\{y\in S^m_V|F_{\bar{*}}(s,y)=a\}}$ for a regular value $a$ of $F_s:M\rightarrow \Bbb R$. Thus one can apply the coarea formula and (\ref{notaebok}) to get
\begin{eqnarray}\label{sonshine}
\int_{\Sigma_{s,a}} \frac{\partial F}{\partial s}  {| \nabla^M F_s | }^{-1} d\sigma_a &=&
\frac{d}{dt}|_{t=a}\int_{-\infty}^t\int_{\Sigma_{s,t}} \frac{\partial F}{\partial s}  {| \nabla^M F_s | }^{-1} d\sigma_t dt \nonumber\\ &=&
\frac{d}{dt}|_{t=a}\int_{\{x\in M|F(s,x)<t\}} \frac{\partial F}{\partial s}\ d\mu_g\nonumber\\ &=&
\frac{d}{dt}|_{t=a}\int_{\{y\in S^m_V|F_{\bar{*}}(s,y)<t\}} \frac{\partial F_{\bar{*}}}{\partial s}\ d\mu_{\textsl{g}_V} \nonumber\\ &=&
\frac{d}{dt}|_{t=a}\int_{-\infty}^t\int_{\Sigma_{s,t}^*} \frac{\partial F_{\bar{*}}}{\partial s}  {| \nabla^S F_{\bar{*}s}| }^{-1} d\sigma_t dt \nonumber\\ &=& \int_{\Sigma_{s,a}^*} \frac{\partial F_{\bar{*}}}{\partial s}  {| \nabla^S F_{\bar{*}s}| }^{-1} d\sigma_a.
\end{eqnarray}
Let $\{t_1,\cdots,t_l\}$ be the set of critical values of $F_s$ lying in $(a_1,a_2)$ for $s\in N_0$, and set $t_0:=a_1, t_{l+1}:=a_2$.
We combine the coarea formula, (\ref{sonshine}), and (\ref{WTH}) to obtain
\begin{eqnarray*}
& & \int_{\{x\in M|a_1<F(s,x)<a_2\}} \left|\frac{\partial F}{\partial s}\right|^pd\mu_g\\
 &=&
 \sum_{i=0}^{l}\int_{t_{i}}^{t_{i+1}} \left(\int_{\Sigma_{s,t}}\left|\frac{\partial F}{\partial s}\right|^p {| \nabla^M F_s| }^{-1} d\sigma_t\right)dt\\ &\geq&  \sum_{i=0}^{l}\int_{t_{i}}^{t_{i+1}} \left|\int_{\Sigma_{s,t}}\frac{\partial F}{\partial s} {| \nabla^M F_s | }^{-1} d\sigma_t\right|^p\left(\int_{\Sigma_{s,t}}{| \nabla^M F_s| }^{-1} d\sigma_t\right)^{-\frac{p}{q}}dt\\
&=&  \sum_{i=0}^{l}\int_{t_{i}}^{t_{i+1}} \left|\int_{\Sigma_{s,t}^*}\frac{\partial F_{\bar{*}}}{\partial s} {| \nabla^S F_{\bar{*}s} | }^{-1} d\sigma_t\right|^p\left(\int_{\Sigma_{s,t}^*}{| \nabla^S F_{\bar{*}s} | }^{-1} d\sigma_t\right)^{-\frac{p}{q}}dt\\
&=&  \sum_{i=0}^{l}\int_{t_{i}}^{t_{i+1}} \left(\int_{\Sigma_{s,t}^*}\left|\frac{\partial F_{\bar{*}}}{\partial s}\right|^p {| \nabla^S F_{\bar{*}s} | }^{-1} d\sigma_t\right)dt\\
&=& \int_{\{y\in S^m_V|a_1<F_{\bar{*}}(s,y)<a_2\}} \left|\frac{\partial F_{\bar{*}}}{\partial s}\right|^p d\mu_{\textsl{g}_V},
\end{eqnarray*}
where the inequality of the 3rd line is due to H\"older inequality with $\frac{1}{p}+\frac{1}{q}=1$ applied to the integration on $\Sigma_{s,t}$ for each $t\notin \{t_1,\cdots,t_l\}$ with respect to a volume element ${| \nabla^M F_s | }^{-1} d\sigma_t$, and the equality of the 5th line is due to the radial symmetry of $F_{\bar{*}}$ along $S^m_V$. The 1st inequality follows from this integral inequality.

The 2nd and 3rd inequalities are just the restatements of Proposition \ref{sejong-brt} and (\ref{Chokuk3}) of Proposition \ref{blessed}.
In case $Ric_g\geq k$, from (\ref{itismyfault-1}) we know $C_1=(\frac{V_m}{V})^{\frac{1}{m}}\sqrt{\frac{m-1}{k}}$.
Since $(\frac{V_m}{V})^{\frac{1}{m}}\sqrt{\frac{m-1}{k}}$ is invariant under a homothety $g\mapsto a^2g$ for any $a>0$, by considering the case when $Ric_g\geq k=m-1$ one can see that it becomes $(\frac{V_m}{V})^{\frac{1}{m}}\geq 1$. Thus $\max(1,C_1)$ is $C_1=(\frac{V_m}{V})^{\frac{1}{m}}\sqrt{\frac{m-1}{k}}$.

From Proposition \ref{hanguk}, we know that $F_{\bar{*}}$ is extended to a Lipschitz continuous function on $N\times S^m_V$.
To show its weak differentiability, we use Proposition \ref{Sewoong}.
The restriction of $F_{\bar{*}}$ to every coordinate line of a local coordinate on $N\times S^m_V$ is Lipschitz continuous and hence absolutely continuous.
Since the 1st order strong partial derivatives of $F_{\bar{*}}$ defined a.e. in $N\times S^m_V$ are locally integrable as shown in the above, we can conclude that $F_{\bar{*}}$ is weakly differentiable in $N\times S^m_V$ and
belongs to $L_1^p(N\times S^m_V)$ for any $p\geq 1$.

\end{proof}

\subsection{The continuity of spherically-rearranging map}

Meanwhile we have the generalization of Proposition \ref{Firstlady-1} to the fiberwise symmetrization :
\begin{Theorem}\label{Firstlady}
Suppose $(M^m,g)$ and $(N^n,h)$ are smooth closed Riemannian manifolds such that $M$ has volume $V$ and $Ric_g>0$.
Let $$\mathcal{F}:=\{F:N\times M\stackrel{C^\infty}{\rightarrow} \Bbb R|F \textrm{ is a generic-fiberwise Morse function.}\}$$ and $$\tilde{\mathcal{F}}:=\{F_{\bar{*}}:N\times S^m_V\rightarrow \Bbb R|F\in \mathcal{F}\}$$ be endowed with $C^0$-topology.
Then the spherically-rearranging map $\Phi:\mathcal{F}\rightarrow \tilde{\mathcal{F}}$  given by $\Phi(F)=F_{\bar{*}}$ is Lipschitz continuous with Lipschitz constant 1,
i.e. for any $F, G\in \mathcal{F}$   $$||G_{\bar{*}}-F_{\bar{*}}||_\infty\leq ||G-F||_\infty.$$
\end{Theorem}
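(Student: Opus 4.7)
The plan is to reduce the claim to the pointwise (non-parametric) version already proved in Proposition \ref{Firstlady-1}, and then exploit the continuity of the extensions from Proposition \ref{hanguk}.

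First I would take any $F,G\in\mathcal{F}$ with their associated open dense sets $N_0(F), N_0(G)\subseteq N$, each with measure-zero complement, on which the fibers $F_s, G_s$ are Morse functions on $M$. By Baire's theorem, the intersection $N_*:=N_0(F)\cap N_0(G)$ is again open dense in $N$ (with measure-zero complement). For every $s\in N_*$, both $F_s$ and $G_s$ are Morse on $M$, so $F_{\bar{*}s}=(F_s)_{*}$ and $G_{\bar{*}s}=(G_s)_{*}$ are the ordinary spherical rearrangements of Morse functions, and Proposition \ref{Firstlady-1} applies directly to give
\[
\|F_{\bar{*}s}-G_{\bar{*}s}\|_{\infty}=\|(F_s)_{*}-(G_s)_{*}\|_{\infty}\leq \|F_s-G_s\|_{\infty}\leq \|F-G\|_{\infty}.
\]

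Taking the supremum over $s\in N_*$ and $q\in S^m_V$, this yields $|F_{\bar{*}}(s,q)-G_{\bar{*}}(s,q)|\leq \|F-G\|_{\infty}$ on the dense subset $N_*\times S^m_V$ of $N\times S^m_V$. Since $Ric_g>0$ by hypothesis, Proposition \ref{hanguk} guarantees that both $F_{\bar{*}}$ and $G_{\bar{*}}$ extend to Lipschitz continuous functions on the whole of $N\times S^m_V$, so the bound propagates by continuity to every point of $N\times S^m_V$. Taking the supremum gives $\|G_{\bar{*}}-F_{\bar{*}}\|_\infty\leq\|G-F\|_\infty$, as desired.

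There is no serious obstacle here; the only care required is to verify that the two parametric rearrangements $F_{\bar{*}}$ and $G_{\bar{*}}$ can be compared on a common dense set on which both factor through the unparametric construction, which is exactly what Baire's theorem delivers, and that the uniform pointwise inequality carries over to the continuous extensions, which is immediate from Proposition \ref{hanguk}.
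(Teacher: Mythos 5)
Your proof is correct and is essentially the paper's proof, just phrased in the direct rather than contrapositive form: the paper argues by contradiction, using Baire and the continuity of the extensions to locate a point $(s_0,r_0)$ in the common dense set where the strict inequality would fail, and then re-runs the measure comparison from Proposition~\ref{Firstlady-1} at that point, whereas you invoke Proposition~\ref{Firstlady-1} fiberwise on $N_0(F)\cap N_0(G)$ and pass to the limit by continuity. The underlying ingredients — Baire's category theorem, the non-parametric Lipschitz estimate, and the Lipschitz extension from Proposition~\ref{hanguk} (which is where $Ric_g>0$ enters) — are identical.
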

\begin{proof}
The proof is similar to that of Proposition \ref{Firstlady-1}.
To prove by contradiction, suppose not.
Take $F,G\in \mathcal{F}$ such that $$||G_{\bar{*}}-F_{\bar{*}}||_\infty> ||G-F||_\infty.$$

Since $N_0(F)\cap N_0(G)$ is dense in $N$ by Baire's category theorem,
there exist $s_0\in N_0(F)\cap N_0(G)$ and $r_0\in (0,R_V)$ such that $$|G_{\bar{*}}(s_0,r_0)-F_{\bar{*}}(s_0,r_0)|>||G-F||_\infty.$$
($G_{\bar{*}}$ and $F_{\bar{*}}$ are understood as functions on $N\times [0,R_V]$.)

Without loss of generality we may assume that $$F_{\bar{*}}(s_0,r_0)> G_{\bar{*}}(s_0,r_0)+||G-F||_\infty$$ holds, and we set $a:=G_{\bar{*}}(s_0,r_0)$.
Then
\begin{eqnarray*}
\mu(\{y\in S^m_V|r(y)<r_0\}) &=& \mu(\{y\in S^m_V|G_{\bar{*}}(s_0,r(y))< a  \}) \\ &=& \mu(\{x\in M|G(s_0,x)<  a  \})\\ &\leq& \mu(\{x\in M|F(s_0,x)<  a+||G-F||_\infty  \}) \\
&=& \mu(\{y\in S^m_V|F_{\bar{*}}(s_0,r(y))<  a+||G-F||_\infty  \})\\ &<& \mu(\{y\in S^m_V|F_{\bar{*}}(s_0,r(y))<  F_{\bar{*}}(s_0,r_0) \})\\ &=& \mu(\{y\in S^m_V|r(y)<r_0\})
\end{eqnarray*}
which yields a contradiction.
\end{proof}

\begin{Corollary}
Under the same hypothesis as the above theorem, $\Phi$ can be uniquely extended to the Lipschitz continuous map $$\Phi:C^0(N\times M)\rightarrow C^0(N\times S^m_V)$$
with Lipschitz constant 1.
\end{Corollary}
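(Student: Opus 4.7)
The plan is to apply the standard uniform--continuity extension theorem: a Lipschitz map from a dense subset of a metric space into a complete metric space extends uniquely to a Lipschitz map on the whole space with the same Lipschitz constant. Since $C^0(N\times S^m_V)$ equipped with the sup norm is a Banach space, hence complete, the only substantive ingredients are the density of $\mathcal{F}$ in $C^0(N\times M)$ and a routine Cauchy--sequence argument to produce the extension.

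First I would establish density. Proposition \ref{morse} shows that generic-fiberwise Morse functions are dense in $C^\infty(N\times M)$ in the $C^2$-topology; since the $C^2$-topology is finer than the $C^0$-topology, $\mathcal{F}$ is a fortiori $C^0$-dense in $C^\infty(N\times M)$. Combined with the standard density of $C^\infty(N\times M)$ in $C^0(N\times M)$ (via partitions of unity and local mollification on a smooth embedding of $N\times M$ into some $\Bbb R^k$), this yields density of $\mathcal{F}$ in $C^0(N\times M)$.

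Next, for any $F\in C^0(N\times M)$, I would choose a sequence $\{F_n\}\subset \mathcal{F}$ with $\|F_n-F\|_\infty\to 0$. Then $\{F_n\}$ is Cauchy, and by Theorem \ref{Firstlady} the sequence $\{(F_n)_{\bar{*}}\}$ is Cauchy in $C^0(N\times S^m_V)$, hence converges to some limit $\tilde{F}\in C^0(N\times S^m_V)$. I would define $\Phi(F):=\tilde{F}$ and check independence of the sequence: for any other approximating sequence $\{G_n\}\subset\mathcal{F}$ with $G_n\to F$, Theorem \ref{Firstlady} gives $\|(G_n)_{\bar{*}}-(F_n)_{\bar{*}}\|_\infty\leq \|G_n-F_n\|_\infty\to 0$, so both sequences produce the same limit.

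Finally, passing to the limit in the inequality $\|(F_n)_{\bar{*}}-(G_n)_{\bar{*}}\|_\infty\leq \|F_n-G_n\|_\infty$ for approximants $F_n\to F$, $G_n\to G$ shows that the extended $\Phi$ is Lipschitz with constant $1$, and uniqueness of the extension follows from density of $\mathcal{F}$ together with continuity. No step presents a real obstacle; the entire argument is the standard Lipschitz--extension theorem, and the only nontrivial content is the $C^0$-density of $\mathcal{F}$, which is a direct consequence of Proposition \ref{morse} combined with smooth approximation of continuous functions.
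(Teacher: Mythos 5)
Your proposal is correct and follows essentially the same route as the paper: density of $\mathcal{F}$ in $C^0(N\times M)$ is obtained from its $C^2$-density in $C^\infty(N\times M)$ together with the density of smooth functions among continuous ones, and the extension is then the standard unique Lipschitz extension from a dense subset into the complete space $C^0(N\times S^m_V)$. The paper states this more tersely, while you spell out the Cauchy-sequence construction and well-definedness, but the content is identical.
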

\begin{proof}
We have shown that $\mathcal{F}$ is dense in $C^\infty(N\times M)$ in $C^2$-topology and hence $C^0$-topology as well. Since $C^\infty(N\times M)$ is dense in  $C^0(N\times M)$ w.r.t. $C^0$-topology, $\mathcal{F}$ is dense in $C^0(N\times M)$. Now the Lipschitz continuity of $\Phi$ on $\mathcal{F}$ ensures such an extension.
\end{proof}

\section{Euclidean and hyperbolic rearrangements}\label{SWYang}

In the same way as the spherical rearrangement, one can take a rearrangement on the other isoperimetric model spaces, i.e. $(\Bbb R^m,g_{_{\Bbb E}})$ or Poincar\'{e} $m$-disk $(\Bbb H^m,g_{_{\Bbb H}})$, and obtain all the results corresponding to those obtained so far.

\subsection{Rearrangement in Euclidean space}
Let $(M^m,g)$ be a smooth closed Riemannian $m$-manifold with volume $V$.
The Euclidean rearrangement of a smooth Morse function $f:M\rightarrow \Bbb R$  is the radially-symmetric continuous function $f_\star: \overline{D^m_V}\rightarrow \Bbb R$ such that $\{y\in \overline{D^m_V}| f_\star(y)<t \}$ for any $t\in (\min(f), \max(f)]$ is an open $m$-disc centered at the origin with the same volume as $\{x\in M| f(x)<t \}$.

More generally its parametrized version is defined just as in the spherical rearrangement. When $(N,h)$ is a smooth closed Riemannian $n$-manifold, and $F:N\times M\rightarrow \Bbb R$ is a generic-fiberwise Morse function such that $F_s$ for $s\in N_0=N_0(F)\subseteq N$ is a Morse function,
the fiberwise Euclidean rearrangement of $F$ is $F_{\bar{\star}}:N_0\times \overline{D^m_V}\rightarrow \Bbb R$ satisfying that
$F_{\bar{\star}s}:=(F_{\bar{\star}})_s$ for any $s\in N_0$ is equal to $(F_s)_{\star}$.
Hence $$\int_{\{x\in M| a<F_s(x)<b \}}F_s^k\ d\mu_g=\int_{\{y\in D^m_{V}|a<F_{\bar{\star}s}(y)<b\}} (F_{\bar{\star}s})^k\ d\mu_{g_{_{\Bbb E}}}$$ for any $k>0$, $s\in N_0$, and $[a,b]\subset \Bbb R$, where $(\cdot)^k$ means $|\cdot|^k$ if $k$ is not an integer.

Since $F_{\bar{\star}}$ is radially symmetric, we shall also regard it as a function on $N_0\times [0,R_V]$ for $R_V:=\frac{1}{2}\textrm{diam}(D^m_V)$. Exactly the same way as $F_{\bar{*}}$ verifies that $F_{\bar{\star}}:N_0\times \overline{D^m_V}\rightarrow \Bbb R$ is continuous everywhere and smooth away from $$\mathcal{C}^\star:=\{(s,y)\in N_0\times \overline{D^m_V}|(s,F_{\bar{\star}}(s,y))\in F(\mathcal{C})\}$$ where $\mathcal{C}$ is the set in (\ref{LHW}).
If $t_0\in \Bbb R$ is a regular value of $F_{s}$ for $s\in N_0$, then
$$
\int_{F^{-1}_{s}(t_0)} {| \nabla^M F_{s} |}^{-1} d\sigma_{t_0} =\int_{(F_{\bar{\star}s})^{-1}(t_0)} |\nabla^S F_{\bar{\star}s} |^{-1} d\sigma_{t_0}
$$
where $\nabla^M$ and $\nabla^S$ mean the fiberwise gradients on each $\{s\}\times M$ and $\{s\}\times \overline{D^m_V}$ respectively.

All the followings are proved verbatim from those of $F_{\bar{*}}$. Let $s$ be a local coordinate function of $N_0$ and $a_1,a_2$ be regular values of $F_{s_0}$ for $s_0\in N_0$. Defining $\chi_{s}$ and $\chi^\star$ according to $a_1$ and $a_2$ just as in the case of $F_{\bar{*}}$,
we have
\begin{eqnarray*}
\frac{\partial }{\partial s}|_{s=s_0}\int_{M} F_s(x)\chi_{s}(x)\ d\mu_g(x) &=& \int_M \frac{\partial F}{\partial s}(s_0,x)\chi_{s_0}(x)\ d\mu_g(x),
\end{eqnarray*}
and if \footnote{In fact this condition is unnecessary as we have shown in Remark \ref{Tiatus}.} an interval $[a_1,a_2]$ does not contain a critical value of $F_{s_0}$,
\begin{eqnarray*}
\frac{\partial }{\partial s}|_{s=s_0}\int_{D^m_V} F_{\bar{\star}s}(y)\chi^\star(y)\ d\mu_{g_{_{\Bbb E}}}(y)
&=&\int_{D^m_V} \frac{\partial F_{\bar{\star}}}{\partial s}(s_0,y)\chi^\star(y)\ d\mu_{g_{_{\Bbb E}}}(y).
\end{eqnarray*}
Thus for any $[a_1,a_2]\subset \Bbb R$ and any $p\geq 1$, $$\int_{\{x\in M|a_1<F_{s}(x)<a_2\}} \frac{\partial F}{\partial s}\ d\mu_g=\int_{\{y\in D^m_V|a_1<F_{\bar{\star}s}(y)<a_2\}} \frac{\partial F_{\bar{\star}}}{\partial s}\ d\mu_{g_{_{\Bbb E}}}$$
\begin{eqnarray*}
\int_{\{y\in D^m_V|a_1<F_{\bar{\star}s}(y)<a_2\}} \left|\frac{\partial F_{\bar{\star}}}{\partial s}\right|^p d\mu_{g_{_{\Bbb E}}}
\leq
\int_{\{x\in M|a_1<F_s(x)<a_2\}} \left|\frac{\partial F}{\partial s}\right|^p d\mu_g,
\end{eqnarray*}
and for any regular value $a_0=F_{\bar{\star}}(s,r_0)$ of $F_{s}$
\begin{eqnarray*}
\left|\frac{\partial F_{\bar{\star}}}{\partial s}(s,r_0)\right| &\leq& \max_{\{x\in M|F_{s}(x)=a_0\}}\left|\frac{\partial F}{\partial s}(s,x)\right|
\end{eqnarray*}
$$|\nabla^S F_{\bar{\star}s}(r_0)|\leq C_1'\max_{\{x\in M|F_s(x)=a_0\}}|\nabla^M F_s(x)|$$ for a constant $C_1'$ depending only on $(M,g)$.
Finally for $$\mathcal{F}:=\{f:M\stackrel{C^\infty}{\rightarrow} \Bbb R|f \textrm{ is a Morse function.}\}$$ and
$$\hat{\mathcal{F}}:=\{f_{\star}: \overline{D^m_V}\rightarrow \Bbb R|f\in \mathcal{F}\}$$ both endowed with $C^0$-topology,
the map $\Phi:\mathcal{F}\rightarrow \hat{\mathcal{F}}$ given by $\Phi(f)=f_\star$ is Lipschitz continuous with Lipschitz constant 1.

To exactly estimate the $L_1^2$-norm of $F_{\bar{\star}}$, we need to determine the constant $C_1'$ in the above, and hence we need a curvature condition on $M$.
\begin{Proposition}\label{Haemu}
Let $f:M\rightarrow \Bbb R$ be a smooth Morse function. Suppose that an open subset $B\subset M$ is isometric to a bounded
domain of $(\Bbb R^m, g_{_{\Bbb E}})$ and $f^{-1}(-\infty,b]$ for $b\in \Bbb R$ is contained in $B$.
Then for any $a<b$ and any $p\geq 1$
\begin{eqnarray*}
\int_{f_\star^{-1}(a,b]} {| \nabla f_\star| }^p\ d\mu_{g_{_{\Bbb E}}}
\leq
\int_{f^{-1}(a,b]} {| \nabla f | }^p\ d\mu_{g_{_{\Bbb E}}},
\end{eqnarray*}
and for any regular value $a_0=f_\star(r_0)\in [a,b]$ of $f$
\begin{eqnarray*}
|\nabla f_\star(r_0)|\leq \max_{\{x\in M|f(x)=a_0\}}|\nabla f(x)|.
\end{eqnarray*}
\end{Proposition}
\begin{proof}
The basic idea of its proof is the same as Proposition \ref{pet} and also well-known as a standard method to prove the Faber-Krahn inequality.(\cite{SY})

Let $t\in [a,b]$ be a regular value of $f$.
Using the facts that $\{x\in B| f(x)< t \}$ is an open submanifold of $M$ with smooth boundary $f^{-1} (t)$, and an $m$-dimensional disc has the smallest boundary area  among all hypersurfaces in $\Bbb R^m$ enclosing the same volume, we have $$\mu (f^{-1} (t)) \geq \mu ( f_\star^{-1} (t))$$ instead of (\ref{uncultured}).
The rest of proof proceeds in the same way as in Proposition \ref{pet}.
\end{proof}

By combining all the above the following conclusion for $F_{\bar{\star}}$ is derived in the same way as $F_{\bar{*}}$.
\begin{Theorem}\label{Young&Hyuk}
Let $(M^m,g)$ with volume $V$ and $(N^n,h)$ be smooth closed Riemannian manifolds and $F:N\times M \rightarrow \Bbb R$ be a smooth generic-fiberwise Morse function.
Then $F_{\bar{\star}}$ extends to a unique radially-symmetric Lipschitz continuous $L_1^p$ function in $N\times \overline{D^m_V}$ for any $p\geq 1$.

If an open subset $\mathfrak{B}\subset (M,g)$ is isometric to a bounded domain of $(\Bbb R^m, g_{_{\Bbb E}})$,
and $F^{-1}(-\infty,b]$ for $b\in \Bbb R$ is contained in $N\times \mathfrak{B}$, then for any $a<b$
\begin{eqnarray*}
\int_{F^{-1}_{\bar{\star}}(a,b]} |dF_{\bar{\star}}|^p\ d\mu_{h+g_{_{\Bbb E}}} \leq \int_{F^{-1}(a,b]} |dF|^p\ d\mu_{h+g}.
\end{eqnarray*}
\end{Theorem}

\subsection{Rearrangement in hyperbolic space}
Now let $\mathfrak{D}^m_V$ denote the open $m$-disk in $(\Bbb H^m,g_{_{\Bbb H}})$ with center at the origin and hyperbolic volume $V$.
The hyperbolic rearrangement of a smooth Morse function $f:M\rightarrow \Bbb R$ defined on $(M^m,g)$ with volume $V$ is the radially-symmetric continuous function $f_\bullet: \overline{\mathfrak{D}^m_V}\rightarrow \Bbb R$ such that $\{y\in \overline{\mathfrak{D}^m_V}| f_\bullet(y)<t \}$ for any $t\in (\min(f), \max(f)]$ is the geodesic ball centered at the origin with hyperbolic volume equal to $\mu(\{x\in M| f(x)<t \})$.

One can check that the results corresponding to those of the previous subsection hold in the same way by replacing $(\Bbb R^m, g_{_{\Bbb E}})$ with  $(\Bbb H^m,g_{_{\Bbb H}})$ and using the fact that every geodesic ball in $\Bbb H^m$ is an isoperimetric region.

\section{Proof of Theorem \ref{Yoon-1}}

First recall that on smooth closed Riemannian manifold $(X,{\textrm{g}})$, the first eigenvalue $\lambda_1$ of its Laplacian $\Delta=d^*d+dd^*$ acting on $C^\infty(X)$ is represented as
\begin{eqnarray*}
\lambda_1=\min\{R_{\textrm{g}}(f)|f\in L_1^2(X)\backslash\{0\}, \int_Xf\ d\mu_{\textrm{g}}=0\}
\end{eqnarray*}
where $R_{\textrm{g}}(f)$ is the Rayleigh quotient of $f$ defined as $$\frac{\int_X |df|_{\textrm{g}}^2\ d\mu_{\textrm{g}}}{\int_X f^2\ d\mu_{\textrm{g}}}.$$

Let us fix some notations. As usual, $V$ denotes the volume of $(M,g)$ and set $$\textbf{h}_V:=\left(\frac{V}{V_m}\right)^{\frac{2}{m}}h.$$
By $|*|_\zeta$ we shall mean the norm of $*$ with respect to a metric $\zeta$. Obviously
\begin{eqnarray}\label{JHK0}
\left( \frac{V_m}{V} \right)^{\frac{n}{m}}\int_{N}f\ d\mu_{\textbf{h}_V}=\int_Nf\ d\mu_h, \ \ \ \ \ \  \ \ \ \left(\frac{V}{V_m}\right)^{\frac{2}{m}}| d^{N}f|_{\textbf{h}_V}^2=| d^{N}f|_{h}^2
\end{eqnarray}
for any smooth $f:N\rightarrow \Bbb R$.

Let $\epsilon\ll 1$ be a positive number and $\varphi: N\times M \rightarrow \Bbb R$ be the first eigenfunction of $(N\times M,h+\rho^2g)$ such that $\int_{N\times M}\varphi^2\ d\mu_{h+\rho^2g}=1$. Take a smooth generic-fiberwise Morse function
$\tilde{\varphi} :N\times M \rightarrow \Bbb R$ which is $C^2$-close enough to $\varphi$ so that
$$|R_{h+\rho^2g} (\varphi)-R_{h+\rho^2g}(\tilde{\varphi})|< \epsilon
,\ \ \ \textrm{and}\ \ \ \int_{N\times M}\tilde{\varphi}\ d\mu_{h+\rho^2g}=0$$ where the 2nd condition can be achieved by using $\int_{N\times M}\varphi\ d\mu_{h+\rho^2g}=0$ and adding a small constant as necessary.
By Proposition \ref{yun} and (\ref{JHK0}),
\begin{eqnarray}\label{JHK1}
\int_M| d^M \tilde{\varphi}|^2_g\ d\mu_g\geq {\left( \frac{V}{V_m} \right) }^{\frac{2}{m}}\int_{S^m_{V}}| d^S \tilde{\varphi}_{\bar{*}}|_{\textsl{g}_V}^2d\mu_{\textsl{g}_V}
\end{eqnarray}
\begin{eqnarray}\label{JHK2}
\int_M| d^N \tilde{\varphi}|^2_h\ d\mu_g \geq  {\left( \frac{V}{V_m} \right) }^{\frac{2}{m}}\int_{S^m_{V}}| d^{N} \tilde{\varphi}_{\bar{*}}|^2_{\textbf{h}_V}d\mu_{\textsl{g}_V}
\end{eqnarray}
at any point of $N_0=N_0(\tilde{\varphi})$.

We need to check that $\tilde{\varphi}_{\bar{*}}$ is not zero and still $L^2$-orthogonal to 1.
Indeed
\begin{eqnarray*}
\int_{N\times S^m}\tilde{\varphi}_{\bar{*}}\ d\mu_{\textbf{h}_V+\rho^2\textsl{g}_V} &=& \left( \frac{V}{V_m} \right)^{\frac{n}{m}}\int_N\left(\int_{S^m}\tilde{\varphi}_{\bar{*}}\ d\mu_{\textsl{g}_V}\right)\rho^md\mu_h\\
&=& \left( \frac{V}{V_m} \right)^{\frac{n}{m}}\int_N\left(\int_{M}\tilde{\varphi}\ d\mu_{g}\right)\rho^md\mu_h \\
&=& \left( \frac{V}{V_m} \right)^{\frac{n}{m}}\int_{N\times M}\tilde{\varphi}\ d\mu_{h+\rho^2g}\\
&=& 0
\end{eqnarray*}
and
\begin{eqnarray*}
\int_{N\times S^m}(\tilde{\varphi}_{\bar{*}})^2\ d\mu_{\textbf{h}_V+\rho^2\textsl{g}_V}
&=& \left( \frac{V}{V_m} \right)^{\frac{n}{m}}\int_N\left(\int_{S^m}(\tilde{\varphi}_{\bar{*}})^2\ d\mu_{\textsl{g}_V}\right)\rho^md\mu_h\\
&=& \left( \frac{V}{V_m} \right)^{\frac{n}{m}}\int_N\left(\int_{M}\tilde{\varphi}^2\ d\mu_{g}\right)\rho^md\mu_h \\
&=& \left( \frac{V}{V_m} \right)^{\frac{n}{m}}\int_{N\times M}\tilde{\varphi}^2\ d\mu_{h+\rho^2g}\\ &\neq& 0.
\end{eqnarray*}

Using (\ref{gorba}, \ref{JHK0}, \ref{JHK1}, \ref{JHK2}), we get
\begin{eqnarray*}
R_{h+\rho^2g} (\tilde{\varphi}) &=&\frac{\int_{N\times M} | d\tilde{\varphi}|_{h+\rho^2g}^2d\mu_{h+\rho^2g}}{\int_{N\times M}\tilde{\varphi}^2\ d\mu_{h+\rho^2g}}\\ &=& \frac{\int_{N}\rho^m\int_M \left(\rho^{-2}| d^M \tilde{\varphi}|_g^2+{| d^N \tilde{\varphi}|_h }^2\right) d\mu_{g}d\mu_{h}}{\int_{N}\rho^m\int_M\tilde{\varphi}^2\ d\mu_{g}d\mu_{h}}\\  &\geq& \frac{\left( \frac{V_m}{V} \right) ^{\frac{n}{m}}\int_{N}\rho^m\int_{S^m_{V}}(\frac{V}{V_m})^{\frac{2}{m}}\left(\rho^{-2}| d^S \tilde{\varphi}_{\bar{*}} |_{\textsl{g}_V}^2 +| d^{N} \tilde{\varphi}_{\bar{*}}|_{\textbf{h}_V}^2\right)d\mu_{\textsl{g}_V}d\mu_{\textbf{h}_V}}{ {\left( \frac{V_m}{V} \right) ^{\frac{n}{m}}\int_{N}\rho^m\int_{S^m_{V}}
\tilde{\varphi}_{\bar{*}}^2}\ d\mu_{\textsl{g}_V}d\mu_{\textbf{h}_V}}\\
&=& {\left( \frac{V}{V_m} \right) }^{\frac{2}{m}}R_{\textbf{h}_V+\rho^2\textsl{g}_V} (\tilde{\varphi}_{\bar{*}} )\\  &\geq& \left(\frac{V}{V_m}\right)^{\frac{2}{m}}\lambda_1(N\times S^m,\textbf{h}_V+\rho^2\textsl{g}_V).
\end{eqnarray*}
Therefore
\begin{eqnarray*}
\lambda_1(N\times M,h+\rho^2g) &=& R_{h+\rho^2g} (\varphi) \\ &>& R_{h+\rho^2g} (\tilde{\varphi})-\epsilon\\
 &\geq& \left(\frac{V}{V_m}\right)^{\frac{2}{m}}\lambda_1(N\times S^m,\textbf{h}_V+\rho^2\textsl{g}_V)-\epsilon\\
&=& \lambda_1(N\times S^m,h+\rho^2g_{_{\Bbb S}})-\epsilon
\end{eqnarray*}
where the last equality is simply the application of a general fact $$\lambda_1(X,a^2{\textrm{g}})=\frac{1}{a^2}\lambda_1(X,{\textrm{g}}),$$
and the desired inequality is obtained by taking $\epsilon>0$ arbitrarily small.


\bigskip

\noindent{\bf Acknowledgement} The author would like to express his deepest gratitude to God, parents, and all the teachers who taught him mathematics. Much of this work was done while on sabbatical, so the author is grateful to his home country for allowing him such a valuable opportunity.





\vspace{0.5cm}

\end{document}